\let\realItem\item 
\NewDocumentCommand\myItem{ o }{%
   \IfNoValueTF{#1}%
      {\realItem}
      {\realItem[#1]\def\@currentlabel{#1}}
}
\setlist[enumerate]{
    before=\let\item\myItem,       
    label=\textnormal{(\arabic*)}, 
    widest=iii.                    
}
\newtheorem{theorem}{\bf Theorem}
\newtheorem{proposition}[theorem]{\bf Proposition}
\newtheorem{lemma}[theorem]{\bf Lemma}
\newtheorem{definition}[theorem]{\bf Definition}
\newtheorem{remark}[theorem]{\bf Remark}
\newcommand{\bR}{{\mathbb{R}}}
\newcommand{\bH}{{\mathbb{H}}}
\newcommand{\bE}{{\mathbb{E}}}
\newcommand{\bC}{{\mathbb{C}}}
\newcommand{\bN}{{\mathbb{N}}}
\newcommand{\bP}{{\mathbb{P}}}
\newcommand{\cF}{{\mathcal{F}}}
\newcommand{\Hi}{{\mathcal{H}}}
\newcommand{\dr}{\mathrm{d}}
\newcommand{\Div}{\operatorname{div}}
\newcommand{\rot}{\operatorname{rot}}
\title[Chernoff for the heat and the Schr\"odinger equation in the Heisenberg group]{Chernoff solutions of the heat and the Schr\"odinger equation in the Heisenberg group}
\author[Nicol\`o Drago]{Nicol\`o Drago}
	\address[Nicol\`o Drago]{Department of Mathematics, University of Genova,  Via Dodecaneso, 35, 16146 Genova GE, Italy}
	\email[Nicol\`o Drago]{nicolo.drago@unige.it}
\author[Sonia Mazzucchi]{Sonia Mazzucchi}
	\address[Sonia Mazzucchi]{Department of Mathematics, University of Trento, Via Sommarive 14, 38123 Povo (Trento), Italy}
	\email[Sonia Mazzucchi]{sonia.mazzucchi@unitn.it}
\author[Andrea Pinamonti]{Andrea Pinamonti}
	\address[Andrea Pinamonti]{Department of Mathematics, University of Trento, Via Sommarive 14, 38123 Povo (Trento), Italy}
	\email[Andrea Pinamonti]{andrea.pinamonti@unitn.it}
\begin{document}
\begin{abstract}
    This paper investigates the application of the classical Chernoff’s theorem to construct explicit solutions for the heat and Schrödinger equations on the Heisenberg group $\mathbb{H}^d$. Using semigroup approximation techniques, we obtain analytically tractable and numerically implementable representations of fundamental solutions. In particular, we establish a new connection between the heat equation and Brownian motion on $\mathbb{H}^d$ and provide a rigorous realization of the Feynman path integral for the Schrödinger equation. The study highlights the challenges posed by the noncommutative structure of the Heisenberg group and opens new directions for PDEs on sub-Riemannian manifolds.
\end{abstract}

\maketitle

\section{Introduction}
The study of partial differential equations (PDEs) on noncommutative spaces has attracted significant attention in recent years, as such problems naturally arise in various branches of mathematical physics, harmonic analysis, and geometric analysis \cite{Bar,BLU07}. Among these, the Heisenberg group $\mathbb{H}^d$ plays a fundamental role as a prototype of a noncommutative, sub-Riemannian manifold, providing a rich structure that generalizes Euclidean settings while capturing essential features of quantum mechanics and hypoelliptic analysis. In this context, the heat equation and the Schr\"odinger equation on the Heisenberg group are of particular interest, as they are closely related to fundamental operators in sub-Riemannian geometry, such as the sub-Laplacian and the Rockland operators (see \cite{BLU07,Cap, Fis} and references therein).

The purpose of this paper is to construct explicit solutions to the heat equation and the Schr\"odinger equation on the Heisenberg group using Chernoff's theorem, a powerful tool from semigroup theory. In particular, for what concern the heat equation we will study the Chernoff approximation in different Banach spaces, i.e. $L^2$ and $C_0$. 
The theorem provides a method for approximating strongly continuous semigroups, which are often used to describe the evolution of solutions to parabolic and dispersive PDEs. Using this approach, we obtain approximations of the fundamental solutions of the heat and Schr\"odinger equations that are both analytically tractable and numerically implementable.
Furthermore, the particular Chernoff-type approximations presented in this paper allow for the construction of related functional integral representations of the solutions. Indeed, in the case of the heat equation, we show how the approximations for the classical solution in the Banach space $C_0(\bH^d)$  lead to the construction of the Brownian motion on the Heisenberg group as the weak limit of a sequence of piecewise-geodesic random walks. Moreover, in the case of the Schr\"odinger equation on $L^2(\bH^d)$ we provide a rigorous mathematical realization of the heuristic Feynman path integral representation for the solution. This problem is highly non-trivial and, to the best of our knowledge, it is tackled here for the first time in a sub-Riemannian setting.

The application of Chernoff's theorem to the Heisenberg group presents several challenges and interesting features. Unlike the Euclidean setting, where the Laplacian generates a simple Gaussian heat kernel, the sub-Laplacian on the Heisenberg group exhibits anisotropic diffusion due to the underlying noncommutative structure. This requires careful adaptation of Chernoff's approximation techniques to the framework of the Heisenberg group. Furthermore, in the case of the Schrödinger equation, the presence of nontrivial commutation relations in the Heisenberg algebra leads to a distinct form of wave propagation, which differs significantly from the classical Schrödinger evolution in Euclidean space.

This work contributes to the existing literature by providing a constructive approach to heat and Schrödinger equations in the Heisenberg group, complementing previous studies that have relied on Fourier transform techniques, representation theory, and functional calculus; see, for example, \cite{ G1,G2,G3,G4,Fis}. Our results highlight the applicability of Chernoff's theorem in the setting of sub-Riemannian geometry and open new directions for further investigation, including potential extensions to other classes of nilpotent Lie groups and their associated PDEs.

\vspace{10pt}

The structure of the paper is as follows. In Section 2, we provide background information on some well-known notions in semigroup theory, set the notation for the Heisenberg group, and recall some useful facts. Section 3 is dedicated to constructing solutions for the heat equation using Chernoff's theorem. In Section 4, we also employ Chernoff's theorem to construct solutions for the Schrödinger equation, provide a Feynman-Kac formula, and interpret the results through Feynman path integrals. In Section 5, we rigorously realize the Feynman path integral representation as an infinite-dimensional oscillatory integral on a suitable Hilbert space of continuous paths. Finally, in Section 6, using the results from Section 2, we present a new construction of Brownian motion in the Heisenberg group.

\vspace{2pt}

\textbf{Acknowledgements:} 
S. Mazzucchi and A. Pinamonti are members of {\em Gruppo Nazionale per l'Analisi Matematica, la Probabilit\`a e le loro Applicazioni} (GNAMPA), of the {\em Istituto Nazionale di Alta Matematica} (INdAM), and A.P. was partially supported by MIUR-PRIN 2017 Project \emph{Gradient flows, Optimal Transport and Metric Measure Structures} and by the European Union under NextGenerationEU. PRIN 2022 Prot. n. 2022F4F2LH.
N. Drago is a member of {\em Gruppo Nazionale per la Fisica Matematica} (GNFM), of the {\em Istituto Nazionale di Alta Matematica} (INdAM).

\vspace{2pt}

\textbf{Data availability statement:}
Data sharing is not applicable to this article as no new data were created or analysed in this study.

\vspace{2pt}

\textbf{Conflict of interest statement:}
The authors certify that they have no affiliations with or involvement in any organization or entity with any financial interest or non-financial interest in
the subject matter discussed in this manuscript.

\section{Preliminary material}

\subsection{$C_0$ semigroups, evolution equations and Chernoff Theorem}
 In the following we shall denote with the symbol $\mathcal{B}$ a complex  Banach space with norm $\| \; \|$, and with $\mathfrak{B}(\mathcal{B})$ the set of bounded linear operators $L:\mathcal{B}\to \mathcal{B}$. Finally $\mathbb{R}_+:= [0,+\infty)$.  We recall that a mapping $V:\mathbb{R}_+\to \mathfrak{B}(\mathcal{B})$, is called a {\em $C_0$-semigroup}, or  a {\em strongly continuous one-parameter semigroup} (of  bounded operators) if it satisfies the following conditions:
 \begin{itemize}
\item $V(0)= I$ the identity operator on $\mathcal{B}$,
	
\item   $V(t+s)=V(t)V(s)$ if $t,s \in \bR_+$,
	
\item  $\bR_+ \ni t \mapsto V(t)x$ is continuous for every $x\in \mathcal{B}$, i.e., $V$ is continuous in the {\em strong operator topology}.\hfill $\blacksquare$
	\end{itemize}
	
	

As is well known \cite{EN1}, if $(V(t))_{t\geq 0}$ is a $C_0$-semigroup in Banach space $\mathcal{B}$, then the set
 \begin{equation}\nonumber D(L) := \left\{\varphi\in \mathcal{B} \: \left|\: \exists \lim_{t\to +0}\frac{V(t)\varphi-\varphi}{t}\right.\right\}
\label{DL}\end{equation} is a dense linear subspace of $\mathcal{B}$ invariant under the action of each $V(t)$, $t\geq 0$.
 The operator $L: D(L)\to \mathcal{B}$  $$L\varphi=\lim_{t\to +0}\frac{V(t)\varphi-\varphi}{t}\:, \quad \varphi \in D(L)$$
 is called the ( infinitesimal) generator of the $C_0$-semigroup $V$.
 The generator turns out to be  a closed linear operator that defines $V$ uniquely which, in turn, is denoted\footnote{As is well
 known, this notation is only formal in the general case even if in some situations it has a rigorous 
meaning in terms of norm-converging series if $L$ is bounded respectively spectral functional calculus in Hilbert spaces when $L$ is normal.} as $V(t)=e^{tL}$.

If $L: D(L) \to \mathcal{B}$ with $D(L) \subset \mathcal{B}$ is an operator, the problem of finding a function $u\colon \bR_+\to \mathcal{B}$ such that 
\begin{equation}\label{ACP1}
\left\{ \begin{array}{ll}
 \frac{d}{d t}u(t)= Lu(t); & t\geq 0,\\
 u(0)=u_0,\\
  \end{array} \right.
\end{equation}
is called the abstract Cauchy problem (for the evolution equation) associated
to $L$. A function $u\colon \bR_+\to \mathcal{B}$ is called a classical solution to the abstract Cauchy problem (\ref{ACP1})
if, for every $t\geq 0$, the function $u$ has a continuous derivative (in the topology of $\mathcal{B}$) $u'\colon \bR_+\to \mathcal{B}$, it holds $u(t)\in D(L)$ for $t\in \bR_+$, and (\ref{ACP1}) holds. 

In the case where the operator $L: D(L) \to \mathcal{B}$ in \eqref{ACP1} is the generator of a strongly continuous semigroup $(V(t))_{t\geq 0}$ in the Banach space $\mathcal{B}$, then it is well known (see, e.g., Proposition 6.2 in \cite{EN1}, p. 145) that for every $u_0\in D(L)$ there is a unique classical solution to the abstract Cauchy problem (\ref{ACP1}), which is given by formula $u(t)=V(t)u_0$.



For later convenience, we recall here Chernoff's theorem \cite{Cher,EN1}.
The latter is a fundamental result in semigroup theory that provides a method for approximating strongly continuous semigroups in terms of the limit of suitable operator-valued functions. 
\begin{theorem}[Chernoff theorem]\label{Chernoff-theorem}
Let $(e^{tL})_{t\geq 0}$ be a $C_0$-semigroup on a complex Banach space $\mathcal{B}$ with generator  $L: D(L) \subset \mathcal{B}\to \mathcal{B}$ and let $S:\mathbb{R}_+\to \mathfrak{B}(\mathcal{B})$ be a map satisfying the following conditions:
\begin{enumerate}
    \item[i.]\label{Item: Chernoff conditions - normalization}
    $S(0) = I$, i.e., $S(0)f=f$ for every $f \in \mathcal{B}$;
    \item[ii.]\label{Item: Chernoff conditions - norm bound}
    There exists $\omega\in\mathbb{R}$ such that $\|S(t)\|\leq e^{\omega t}$ for all $t\geq 0$;
    \item[iii.]\label{Item: Chernoff conditions - strong continuity}
    The function $S$ is  continuous in the strong topology in  $\mathfrak{B}(\mathcal{B})$;
    \item[iv.]\label{Item: Chernoff conditions - approximation on a core}
    There exists a linear subspace $\mathcal{D} \subset D(L)$ that is a core for the operator $L $   and such that  $\lim_{t \to 0}(S(t)f-f-tLf)/t=0$ for each $f \in \mathcal{D}$.
\end{enumerate}
Then the following holds:
\begin{equation}\label{Chest}
\lim_{n\to\infty}\sup_{t\in[0,T]}\left\|S(t/n)^nf - e^{tL}f\right\|=0,\quad \mbox{for every $f\in \mathcal{B}$ and every $T>0$,}
\end{equation}
where $S(t/n)^n$ is a composition of $n$ copies of the linear bounded operator $S(t/n)$.
\end{theorem}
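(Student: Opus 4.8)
The plan is to follow the classical two-step scheme: first reduce to the contractive case, then compare $S(t/n)^n$ with a genuine, norm-convergent exponential. Replacing $S(t)$ by $\tilde S(t):=e^{-\omega t}S(t)$ and $L$ by $\tilde L:=L-\omega I$ (which generates $e^{t\tilde L}=e^{-\omega t}e^{tL}$ and, since $D(\tilde L)=D(L)$ with equivalent graph norm, retains $\mathcal{D}$ as a core), condition ii. becomes $\|\tilde S(t)\|\le 1$ while i., iii., iv. are preserved; as
\begin{equation}\nonumber
S(t/n)^nf-e^{tL}f=e^{\omega t}\big(\tilde S(t/n)^nf-e^{t\tilde L}f\big),\qquad e^{\omega t}\le e^{|\omega|T}\ \text{ on }[0,T],
\end{equation}
it suffices to prove the statement assuming $\|S(t)\|\le1$ for all $t\ge0$. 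In this contractive setting I would split, for $f\in\mathcal{D}$,
\begin{equation}\nonumber
S(t/n)^nf-e^{tL}f=\big[S(t/n)^nf-e^{n(S(t/n)-I)}f\big]+\big[e^{n(S(t/n)-I)}f-e^{tL}f\big]=:(\mathrm I)_n+(\mathrm{II})_n,
\end{equation}
noting that $n(S(t/n)-I)$ is bounded, so $e^{n(S(t/n)-I)}$ is an honest exponential, and $\|e^{n(S(t/n)-I)}\|\le e^{-n}\sum_{k\ge0}\frac{n^k}{k!}\|S(t/n)\|^k\le1$.

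For $(\mathrm I)_n$ I would invoke the Chernoff estimate: for any contraction $C$ on $\mathcal{B}$ one has $\|C^nf-e^{n(C-I)}f\|\le\sqrt n\,\|(C-I)f\|$. This follows by writing $e^{n(C-I)}=e^{-n}\sum_{k\ge0}\frac{n^k}{k!}C^k$, using the telescoping bound $\|(C^n-C^k)f\|\le|n-k|\,\|(C-I)f\|$ (valid since $\|C\|\le1$), and estimating $e^{-n}\sum_k\frac{n^k}{k!}|n-k|\le\big(e^{-n}\sum_k\frac{n^k}{k!}(n-k)^2\big)^{1/2}=\sqrt n$ by Cauchy--Schwarz, the last sum being the variance $n$ of a Poisson law of mean $n$. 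With $C=S(t/n)$ this gives $\|(\mathrm I)_n\|\le\sqrt n\,\|S(t/n)f-f\|$. For uniformity in $t$ I would set $r(\tau):=\sup_{0<\sigma\le\tau}\sigma^{-1}\|S(\sigma)f-f-\sigma Lf\|$, which is nondecreasing and tends to $0$ as $\tau\to0$ by condition iv.; then $\|S(t/n)f-f\|\le (t/n)\|Lf\|+(t/n)\,r(T/n)$, whence $\sup_{t\in[0,T]}\|(\mathrm I)_n\|\le \tfrac{T}{\sqrt n}\big(\|Lf\|+r(T/n)\big)\to0$.

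For $(\mathrm{II})_n$ I would set $G_h:=h^{-1}(S(h)-I)$, the bounded generator of a contraction semigroup $(e^{sG_h})_{s\ge0}$, and observe that $e^{n(S(t/n)-I)}=e^{t\,G_{t/n}}$. Condition iv. says precisely that $G_hf\to Lf$ as $h\to0$ for $f\in\mathcal{D}$; since $\mathcal{D}$ is a core for $L$ and the $e^{sG_h}$ are equibounded, the first Trotter--Kato approximation theorem (see \cite{EN1}) yields, for each fixed $f$, $\varepsilon(h):=\sup_{s\in[0,T]}\|e^{sG_h}f-e^{sL}f\|\to0$ as $h\to0$. The subtlety is that here $h=t/n$ varies with $t$; this is handled by noting that each pair $(s,h)=(t,t/n)$ with $t\in[0,T]$ lies in $[0,T]\times(0,T/n]$, so that
\begin{equation}\nonumber
\sup_{t\in[0,T]}\|(\mathrm{II})_n\|=\sup_{t\in[0,T]}\big\|e^{t G_{t/n}}f-e^{tL}f\big\|\le\sup_{0<h\le T/n}\varepsilon(h),
\end{equation}
and the right-hand side tends to $0$ as $n\to\infty$, because $T/n\to0$ and $\varepsilon(h)\to0$ as $h\to0$.

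Combining the two estimates proves \eqref{Chest} uniformly on $[0,T]$ for every $f\in\mathcal{D}$. To pass to arbitrary $f\in\mathcal{B}$ I would use the uniform bounds $\sup_{t\in[0,T]}\|S(t/n)^n\|\le1$ and $\sup_{t\in[0,T]}\|e^{tL}\|<\infty$ (the latter holding for every $C_0$-semigroup) together with the density of $\mathcal{D}$ in $\mathcal{B}$ in a standard $\varepsilon/3$ argument. I expect the main obstacle to be term $(\mathrm{II})_n$: establishing that the bounded-generator exponentials $e^{sG_h}$ converge to $e^{sL}$ — the Trotter--Kato step, which rests on resolvent convergence deduced from convergence on a core — is where the genuine semigroup theory enters, whereas $(\mathrm I)_n$ reduces to the self-contained combinatorial Chernoff estimate; the extra care needed to make both limits uniform in $t$ through the coupling $h=t/n$ is the remaining delicate point.
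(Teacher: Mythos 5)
Your proposal is correct, and it coincides with the proof the paper implicitly relies on: the paper states Theorem \ref{Chernoff-theorem} without proof, citing \cite{Cher,EN1}, and your argument — rescaling by $e^{-\omega t}$ to the contractive case, the $\sqrt{n}$-estimate $\|C^nf-e^{n(C-I)}f\|\le\sqrt{n}\,\|(C-I)f\|$ via the Poisson-variance computation, and the first Trotter--Kato approximation theorem applied to $G_h=h^{-1}(S(h)-I)$ with the coupling $h=t/n$ controlled through $\varepsilon(h)$ — is exactly the standard proof found in those references. No gaps: the core hypothesis in iv.\ supplies both the range condition needed for Trotter--Kato and the $O(t/n)$ bound on $\|S(t/n)f-f\|$, and the density argument extends the conclusion from $\mathcal{D}$ to all of $\mathcal{B}$.
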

  

A map $S:\bR_+\to \mathfrak{B}(\mathcal{B})$  satisfying formula \eqref{Chest} is called a {\em Chernoff function} for operator $L$ or is said to be {\em Chernoff-equivalent} to the $C_0$-semigroup $(e^{tL})_{t\geq 0}$, while the expression $S(t/n)^nf$ is called a {\em Chernoff approximation}  for $e^{tL}f$. 
\begin{remark}\label{remark Classical Solution}
    If $u_0\in D(L)$ the $\mathcal{B}$-valued function $$u(t):=\lim_{n\to\infty}S(t/n)^nu_0=e^{tL}u_0$$ is the classical solution of the Cauchy problem (\ref{ACP1}), hence Chernoff approximation expressions become approximations to the solution with respect to the norm in $\mathcal{B}$.
\end{remark}
\subsection{The Heisenberg group}\label{subsection-Heisenberg-group}

The Heisenberg group is the Lie group $\mathbb{H}^d:=\mathbb{R}^{2d}\times\mathbb{R}$ whose group law reads
\begin{align}\label{Eq: Heisenberg group law}
	(x,y,s)\cdot(x',y',s')
	:=(x+x',y+y',s+s'+x'y-xy')\,,
\end{align}
where $x,x',y,y'\in\mathbb{R}^d$ while $s,s'\in\mathbb{R}$.
\ndnote{Strange $\cdot_\sigma$ needed to avoid mess with $\mathbb{C}$-rotation.}
Occasionally we will adopt the short notation $(x,y,s)=(z,s)$, $z\in\mathbb{R}^{2d}$ and set {$z\cdot_\sigma z':=x'y-xy'$}.
The generators of the corresponding Lie algebra $\mathfrak{h}^d:=\operatorname{Lie}(\mathbb{H}^d)$ are given by
\begin{align}\label{basis vectors}
    X_i=\partial_{x^i}+y^i\partial_s\,,
    \qquad
    Y_i=\partial_{y^i}-x^i\partial_s\,,
    \qquad
    T=\partial_s\,,
\end{align}
where $i\in\{1,\ldots,d\}$. The only non-trivial commutator relations being
$$[X_j,Y_j]=-2T,\qquad j=1,\dots,d\,.$$
Thus the vector fields
$X_1,\dots,X_d,Y_1\dots,Y_d$ satisfy  H\"ormander's rank condition \cite{BLU07},
and $\mathbb H^d$ is a step $2$ Carnot group, the stratification of the
Lie algebra of left invariant vector fields being given by $\mathfrak h^d=V_1\oplus V_2$ 
$$V_1=\mathrm{span}\;\{X_1,\dots,X_d,Y_1,\dots,Y_d\} \quad \mbox{and}
\quad V_2=\mathrm{span}\;\{T\}.
$$ 
We recall that an absolutely continuous curve $\gamma\colon [a,b]\to \mathbb{H}^d$ is horizontal if there exist $u_{1}, \ldots, u_{2d}\in L^{1}[a,b]$ such that for almost every $t\in [a,b]$:
\[\gamma'(t)=\sum_{i=1}^{d}\left(u_{i}(t)X_{i}(\gamma(t))+u_{i+d}(t)Y_{i}(\gamma(t))\right).\]
Define the horizontal length of such a curve $\gamma$ by $L(\gamma)=\int_{a}^{b}|u(t)|\mathrm{d} t$, where $u=(u_{1}, \ldots, u_{2d})$ and $|\,\cdot\,|$ denotes the Euclidean norm on $\mathbb{R}^{2d}$.
The Carnot-Carath\'eodory distance (CC distance) between points $x, y \in \mathbb{H}^d$ is defined by:
\[d_{\mathbb{H}}(x,y)=\inf \{ L(\gamma) : \gamma \colon [0,1]\to \mathbb{H}^d \mbox{ horizontal joining}\ x\mbox{ to}\ y \}.\]
The Chow-Rashevskii Theorem implies that any two points of $\mathbb{H}^d$ can be connected by horizontal curves \cite[Theorem 9.1.3]{BLU07}. It follows that the CC distance is indeed a distance on $\mathbb{H}^d$. 
For brevity we write $\|x\|$ instead of $d_{\mathbb{H}}(x,0)$. It is well known that $d_{\mathbb{H}}$ induces the same topology as the Euclidean distance but the two are not Lipschitz equivalent.\\
It is also well known that the $(2d+1)$-dimensional Lebesgue measure $\mathcal{L}^{2d+1}$ on $\mathbb{H}^d\equiv \mathbb{R}^{2d+1}$ is the so-called Haar measure of the group.
Unless otherwise stated, when
saying that a property holds a.e.\ on $\mathbb{H}^d$, we will always mean that it holds a.e.\ with
respect to $\mathcal{L}^{2d+1}$.
In what follows we will denote by 
\begin{align}\label{Eq: Casimir operator}
	L_0:=\sum_{i=1}^d(X_i^2+Y_i^2),
\end{align}
the sub-laplacian of $\mathbb{H}^d$ defined on the space of smooth and compactly supported functions $C^{\infty}_c(\mathbb{H}^d)$. It is easy to verify that $L_0$ is symmetric  with respect to the inner  product of the Hilbert space $L^2(\mathbb{H}^d)=L^2(\mathbb{R}^{2d+1})$. Since the distribution generated by $\{X_i,Y_i\}$ is Lie-bracket generating, then $L_0$ is hypoelliptic (and indeed subelliptic). Moreover, $L_0$ is essentially self-adjoint on $C^{\infty}_c(\mathbb{H}^d)$ \cite{Str86}.  In the following we shall denote $L=\bar L_0$ the unique selfadjoint extension, which  is the counterpart of the Laplacian on $\mathbb{R}^d$. 
In addition, it is straightforward to verify that the space of Schwartz functions $S(\mathbb{H}^d)=S(\mathbb{R}^{2d+1})$ is a core for $L$ 
and if $\psi\in S(\mathbb{H}^d)$ an easy computation gives
\begin{align}\label{formalap}
L\psi(z,s)=(\Delta+\|z\|^2\partial_{ss}+2\sum_{i=1}^d (z_{i+d}\partial_{z_is}-z_i\partial_{z_{i+d}s}))\psi(z,s)\,.
\end{align}

 We shall also consider the operator $L_0$  defined by \eqref{Eq: Casimir operator} on the Banach space $C_0(\bH^d)$ of continuous functions vanishing at $\infty$. This functional setting is particularly relevant for the construction of a corresponding Feller semigroup associated to a diffusion process known as the {\em Brownian motion on $\bH^d$}\cite{App}. This is constructed by considering a standard Brownian motion $B(t)=(B_1^1(t),B_1^2(t),\ldots, B_d^1(t), B_d^2(t))$ in $\mathbb{R}^{2d}$ starting at $0$  and the corresponding  {\em Levì Area integral}
\begin{equation}
S(t)=\sum_{j=1}^d\int_{0}^t B_j^2(s)dB_j^1(s)-B_j^1(s)dB_j^2(s)
\end{equation} and by defining the $\bH^d$-valued process $W$ as $W(t)=(B(t),S(t))$. More generally, considered $\xi \in \mathbb{H}^d$,  the \emph{horizontal Brownian motion} starting at $\xi$ is defined as $W_\xi(t):=\xi\cdot W(t)$, we address the interested reader to \cite{Bau,Gor,Neu} for a detailed introduction.

With an abuse of notation, in the following we shall still denote with $\frac{L}{2}$, where $L:D(L)\subset C_0(\bH^d)\to  C_0(\bH^d) $, the generator of the Feller semigroup, heuristically written as $V(t)=e^{L\frac{t}{2}}$. Furthermore,
the subspace $C_c^\infty(\bH^d) $ is still a core for $L $, see e.g. \cite{CarGor}.

In the sequel, we will consider modifications of the operator $L$  by adding a zero-order term in a certain class of bounded functions, namely 
\begin{equation}\label{operator2}
   L_c=\frac{L}{2}+c\,.
\end{equation}
with $c\in  L^\infty(\bH^d)$ in the case the semigroup is defined on $L^2(\bH^d)$, while $c\in  C_b(\bH^d)$  in the case we consider a Feller semigroup on $C_0(\bH^d)$.
In the first case, by Kato-Rellich theorem (\cite{ReSi}, Th. X.12) the operator $L_c$ is selfadjoint on $D(L)$ and essentially self-adjoint on any core of $L$. In the second case,  since the multiplication operator associated to the map $c\in C_b(\bH^d)$ is bounded, by Corollary 7.2 in \cite{EthKur} the operator sum $L_c:D(L)\subset C_0(\bH^d)\to  C_0(\bH^d) $ generates a strongly continuous semigroup $e^{tL_c}$ satisfying $\|e^{tL_c}\|_{\mathfrak{B}(C_0(\mathbb{H}^d))}\leq e^{t\|c\|_{C_b(\mathbb{H}^d)}}$. Moreover, any core for $L$ is also a core for $L_c$.


\section{Heat equation}
In this section we will define a Chernoff function for the semigroup generated by the operator $L$ defined in \eqref{operator2}.
We will be interested in the action of the latter semigroup either on $L^2(\mathbb{H}^d)$ or on $C_0(\mathbb{H}^d)$.
To this avail we consider the operator $S(\tau)$, $\tau\geq 0$, defined by
\begin{align}\label{Eq: Chernoff approximation for Euclidean Feynman integral}
	[S(\tau)\psi](z,s)
:=\int_{\mathbb{R}^{2d}}\psi(z+\sqrt{\tau}\zeta,s+\sqrt{\tau} z\cdot_\sigma\zeta)\mathrm{d}\mu_G(\zeta)+\tau c(x)\psi(x)\,,
\end{align}
where $\psi\in S(\mathbb{H}^d)$ while $\mathrm{d}\mu_G(\zeta)=\frac{1}{(2\pi)^d}e^{-|\zeta|^2/2}\mathrm{d}^{2d}\zeta$ is the standard Gaussian measure.
The function $c\colon\mathbb{H}^d\to\mathbb{R}$ will be chosen in a suitable space related to the Banach space of interest.
Specifically, we will assume that $c\in L^\infty(\mathbb{H}^d)$ when dealing with the semigroup in $L^2(\mathbb{H}^d)$, while we will assume that $c\in C_b(\mathbb{H}^d)$ when considering the semigroup in $C_0(\mathbb{H}^d)$.
Our goal is to prove that $S(\tau)$ fulfills the hypothesis of Theorem \ref{Chernoff-theorem}, thus $S$ is a Chernoff function for the operator defined in \eqref{operator2}.

\begin{proposition}\label{Prop: Chernoff approximation for Euclidean Feynman integral - bound norm and strong continuity}
	Let $c\in L^\infty(\bH^d)$. Then the operator defined in \eqref{Eq: Chernoff approximation for Euclidean Feynman integral} extends to a linear and continuous operator on  $L^2(\mathbb{H}^d)$. Furthermore,
	\begin{align*}
\lim_{n\to\infty}\sup_{\tau\in[0,\tau_0]}
		\|S(\tau/n)\psi-e^{tL_c}\psi\|_{L^2(\mathbb{H}^d)}=0\,,
	\end{align*}
	for all $\psi\in L^2(\mathbb{H}^d)$ and $\tau_0>0$ where $L_c$ is as in \eqref{operator2}.
\end{proposition}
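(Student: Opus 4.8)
The plan is to verify the four hypotheses of Chernoff's theorem (Theorem \ref{Chernoff-theorem}) for the map $S$, taking as core the Schwartz space $\mathcal{S}(\bH^d)$, which is a core for $L_c$ by the remarks following \eqref{operator2}. The structural observation driving the entire argument is that, by the group law \eqref{Eq: Heisenberg group law}, the shift appearing in the integral term of \eqref{Eq: Chernoff approximation for Euclidean Feynman integral} is exactly a right translation: $(z,s)\cdot(\sqrt{\tau}\zeta,0)=(z+\sqrt{\tau}\zeta,\,s+\sqrt{\tau}\,z\cdot_\sigma\zeta)$. Writing $G(\tau)$ for the integral part, so that $S(\tau)=G(\tau)+\tau c$, one has $G(\tau)\psi=\int_{\mathbb{R}^{2d}}R_{(\sqrt{\tau}\zeta,0)}\psi\,\mathrm{d}\mu_G(\zeta)$, where $R_g$ denotes right translation by $g\in\bH^d$. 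This identity makes the noncommutative structure work in our favour and underlies all four verifications.

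For the elementary conditions, since the Haar measure on $\bH^d$ is the Lebesgue measure, each $R_{(\sqrt{\tau}\zeta,0)}$ is an isometry of $L^2(\bH^d)$; Minkowski's integral inequality then gives $\|G(\tau)\psi\|\leq\|\psi\|$, and adding the multiplication term yields $\|S(\tau)\|\leq 1+\tau\|c\|_\infty\leq e^{\tau\|c\|_\infty}$. This furnishes both the extension to a bounded operator on $L^2(\bH^d)$ and the norm bound (ii) with $\omega=\|c\|_\infty$; condition (i) is immediate because $\int\mathrm{d}\mu_G=1$. Strong continuity (iii) follows from the strong continuity of the right regular representation on $L^2(\bH^d)$ together with dominated convergence, the integrand $R_{(\sqrt{\tau}\zeta,0)}\psi$ being norm-bounded by $\|\psi\|$ uniformly in $\zeta$.

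The heart of the matter is condition (iv): for $\psi\in\mathcal{S}(\bH^d)$ one must show $\tau^{-1}\big(S(\tau)\psi-\psi-\tau L_c\psi\big)\to 0$ in $L^2$. I would set $\Phi(\epsilon):=\psi(\gamma_\zeta(\epsilon))$ along the path $\gamma_\zeta(\epsilon)=(z+\epsilon\zeta,\,s+\epsilon\,z\cdot_\sigma\zeta)$, which is affine in $\epsilon$ with constant velocity $v=(\zeta,\,z\cdot_\sigma\zeta)$, and Taylor expand $\Phi(\sqrt{\tau})$ to second order. The first-order term integrates to zero by the centering of $\mu_G$; since $\gamma_\zeta$ is affine, the second-order term is $\tfrac12 v^{\top}(\mathrm{Hess}\,\psi)\,v$, and the Gaussian moments $\int\zeta_i\zeta_j\,\mathrm{d}\mu_G=\delta_{ij}$, $\int\zeta_i(z\cdot_\sigma\zeta)\,\mathrm{d}\mu_G$ (producing the mixed $\partial_{z s}$ couplings), and $\int(z\cdot_\sigma\zeta)^2\,\mathrm{d}\mu_G=\|z\|^2$ reproduce exactly $\tfrac12 L\psi$ as displayed in \eqref{formalap}. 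Combined with the term $\tau c\psi$ this gives $\psi+\tau L_c\psi$, so everything reduces to an $L^2$ estimate on the Taylor remainder.

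The hard part will be controlling that remainder. In integral form it equals $\tfrac{\tau^{3/2}}{2}\int_0^1(1-\theta)^2\Phi'''(\theta\sqrt{\tau})\,\mathrm{d}\theta$, so after dividing by $\tau$ a factor $\tau^{1/2}$ remains; the genuine difficulty is that $\Phi'''$ contains the velocity component $w=z\cdot_\sigma\zeta$ up to the third power, a polynomial in $z$ that threatens $L^2$-integrability. The key device is to undo the translation: $R_{(\theta\sqrt{\tau}\zeta,0)}$ is measure preserving, and because $\zeta\cdot_\sigma\zeta=0$ the weight $w=z\cdot_\sigma\zeta$ becomes $z''\cdot_\sigma\zeta$ at the translated point $z''=z+\theta\sqrt{\tau}\zeta$, so each factor $|w|^k\leq|\zeta|^k|z''|^k$ is transferred onto $\partial^3\psi$, where Schwartz decay absorbs it. This bounds $\|\Phi'''(\theta\sqrt{\tau})\|_{L^2}$ by a polynomial in $|\zeta|$ with coefficients given by Schwartz seminorms of $\psi$, uniformly in $\theta\in[0,1]$ and small $\tau$; integrating against $\mu_G$ (which has finite moments) yields $\tau^{-1}\|\int R\,\mathrm{d}\mu_G\|_{L^2}=O(\tau^{1/2})\to 0$. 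With all four hypotheses established, Theorem \ref{Chernoff-theorem} gives the asserted convergence of $S(\tau/n)^n$ to $e^{\tau L_c}$, uniformly on compact time intervals.
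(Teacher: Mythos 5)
Your proof is correct, and at the structural level it coincides with the paper's: both verify the four hypotheses of Theorem \ref{Chernoff-theorem} on the core $S(\mathbb{H}^d)$, extract $\tfrac12 L\psi$ from the second-order Gaussian moments exactly as in \eqref{formalap}, and treat $c\neq 0$ as the bounded perturbation $\tau c\psi$ yielding the bound $e^{\tau\|c\|_{L^\infty(\mathbb{H}^d)}}$. Where you genuinely differ is in exploiting the identity $(z,s)\cdot(\sqrt{\tau}\zeta,0)=(z+\sqrt{\tau}\zeta,\,s+\sqrt{\tau}\,z\cdot_\sigma\zeta)$, i.e.\ viewing $S(\tau)-\tau c$ as an average of right translations over the unimodular group $\bH^d$. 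For the contraction bound this is essentially cosmetic: the paper's Jensen--Fubini computation and your isometry-plus-Minkowski argument are the same estimate. But for strong continuity (condition iii) the routes diverge: the paper argues on elementary tensors $\psi_{2d}\otimes\psi_1$, splits the increment into an $\mathbb{R}^{2d}$-translation part and an $\mathbb{R}$-translation part, invokes strong continuity of the two Euclidean translation groups, and then extends by density and uniform boundedness; you obtain it in one stroke from strong continuity of the right regular representation of $\bH^d$ on $L^2(\bH^d)$ plus dominated convergence, which is shorter and avoids the tensor-product detour. For condition (iv) the paper expands one order further (to third order with a fourth-order integral remainder, the odd terms vanishing by Gaussian symmetry), whereas your second-order expansion with third-order remainder already gives $O(\sqrt{\tau})$ after dividing by $\tau$, which suffices for Chernoff. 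Finally, your remainder control --- undoing the shift by the measure-preserving change of variables and using $\zeta\cdot_\sigma\zeta=0$, so that the weight $z\cdot_\sigma\zeta$ becomes $z''\cdot_\sigma\zeta$ at the translated point and is absorbed by the Schwartz decay of $\psi$ --- is precisely the justification of the step the paper asserts without detail, namely that the shifted higher-derivative integrand is bounded by a $\tau$-independent polynomial in $u,\zeta$ integrable against the Gaussian; making that explicit is a small but real gain in completeness.
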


\begin{proof}
Let us first assume $c\equiv 0$.	We start by proving that $S(\tau)$ can be extended to an element in $\mathfrak{B}(L^2{(\mathbb{H}^d)})$.
	Let $\psi\in S(\mathbb{H}^d)$. By Jensen inequality 
    we find
	\begin{align*}
		&\int_{\mathbb{R}^{2d+1}}|[S(\tau)\psi](z,s)|^2\mathrm{d}s\mathrm{d}^{2d}z
		\\
		&=\int_{\mathbb{R}^{2d+1}}\bigg|\int_{\mathbb{R}^{2d}}\psi(z+\sqrt{\tau}\zeta,s+\sqrt{\tau} z\cdot_\sigma\zeta)
		\mathrm{d}\mu_G(\zeta)\bigg|^2
		\mathrm{d}s\mathrm{d}^{2d}z
		\\
		&\leq\int_{\mathbb{R}^{2d+1}}\int_{\mathbb{R}^{2d}}|\psi(z+\sqrt{\tau}\zeta,s+\sqrt{\tau} z\cdot_\sigma\zeta)|^2
		\mathrm{d}\mu_G(\zeta)
		\mathrm{d}s\mathrm{d}^{2d}z
		\\
		&=\int_{\mathbb{R}^{2d+1}}\int_{\mathbb{R}^{2d}}|\psi(z,s)|^2
		\mathrm{d}\mu_G(\zeta)
		\mathrm{d}s\mathrm{d}^{2d}z\\
  &=\|\psi\|_{L^2(\mathbb{H}^d)}^2\,,
	\end{align*}
	where we used the translation invariance of the Lebesgue measure on $\mathbb{R}^{2d+1}$ together with Fubini-Tonelli's Theorem.
	This proves that $S(\tau)$ is well-defined on $S(\mathbb{H}^d)$, moreover, it can be extended by density to a bounded, linear operator $S(\tau)\in\mathfrak{B}(L^2(\mathbb{H}^d))$. By direct computation, $S(0)=I$ and $\|S(\tau)\|_{\mathfrak{B}(L^2(\mathbb{H}^d))}\leq 1$, hence condition \ref{Item: Chernoff conditions - normalization} and \ref{Item: Chernoff conditions - norm bound} of Theorem \ref{Chernoff-theorem} are fulfilled
	
	We now prove conditions \ref{Item: Chernoff conditions - strong continuity}) and \ref{Item: Chernoff conditions - approximation on a core}) in Theorem \ref{Chernoff-theorem}.
	
	\begin{description}
		\item[\ref{Item: Chernoff conditions - strong continuity})]
			We need to prove that,
			\begin{align*}
				\|[S(\tau)-S(\tau')]\psi\|_{L^2(\mathbb{H}^d)}
				\underset{\tau\to\tau'}{\longrightarrow}0\,,
			\end{align*}
		for all $\psi\in L^2(\mathbb{H}^d)$.
		For simplicity we consider $\tau'=0$, the general case being treated analogously.
		Let us first consider $\psi_{2d}\in L^2(\mathbb{R}^{2d})$ and $\psi_1\in L^2(\mathbb{R})$. Then
		\begin{align*}
			&\|(S(\tau)-I)(\psi_{2d}\otimes\psi_1)\|_{L^2(\mathbb{H}^d)}^2\\
	&\leq\int_{\mathbb{R}^{2d+1}}\int_{\mathbb{R}^{2d}}
			|\psi_{2d}(z+\sqrt{\tau}\zeta)\psi_1(s+\sqrt{\tau}z\cdot_\sigma\zeta)
			-\psi_{2d}(z)\psi_1(s)|^2
\mathrm{d}\mu_G(\zeta)\mathrm{d}s\mathrm{d}^{2d}z
			\\
	&\leq\int_{\mathbb{R}^{2d+1}}\int_{\mathbb{R}^{2d}}
			|\psi_{2d}(z+\sqrt{\tau}\zeta)(\psi_1(s+\sqrt{\tau}z\cdot_\sigma\zeta)-\psi_1(s))|^2
			\mathrm{d}\mu_G(\zeta)\mathrm{d}s\mathrm{d}^{2d}z
			\\
	&+\int_{\mathbb{R}^{2d+1}}\int_{\mathbb{R}^{2d}}
			|(\psi_{2d}(z+\sqrt{\tau}\zeta)-\psi_{2d}(z))\psi_1(s)|^2
			\mathrm{d}\mu_G(\zeta)\mathrm{d}s\mathrm{d}^{2d}z
			\\
		&=\int_{\mathbb{R}^{2d}}\int_{\mathbb{R}^{2d}}
			|\psi_{2d}(z)|^2
			\|(T^{(1)}_{\sqrt{\tau}(z-\sqrt{\tau}\zeta)\cdot_\sigma\zeta}-I)\psi_1\|_{L^2(\mathbb{R})}^2
			\mathrm{d}\mu_G(\zeta)\mathrm{d}s\mathrm{d}^{2d}z
			\\
			&+\int_{\mathbb{R}^{2d}}
			\|(T^{(2d)}_{\sqrt{\tau}\zeta}-I)\psi_{2d}\|^2_{L^2(\mathbb{R}^{2d})}
			\|\psi_1\|^2_{L^2(\mathbb{R})}
			\mathrm{d}\mu_G(\zeta)\,,
		\end{align*}
		where $T^{(2d)}_\zeta$, $\zeta\in\mathbb{R}^k$ denotes the unitary group of translations on $L^2(\mathbb{R}^{2d})$ and similarly for $T^{(1)}_{s}$, $s\in\mathbb{R}$.
		Since the latter operators are strongly continuous on the corresponding $L^2$ - spaces, the integrands
		\begin{align*}
			\|(T^{(2d)}_{\sqrt{\tau}\zeta}-I)\psi_{2d}\|^2_{L^2(\mathbb{R}^{2d})}\,,
			\qquad
			\|(T^{(1)}_{\sqrt{\tau}(z-\sqrt{\tau}\zeta)\cdot_\sigma\zeta}-I)\psi_1\|_{L^2(\mathbb{R})}^2\,,
		\end{align*}
		vanish as $\tau\to 0^+$ for almost all $z,\zeta\in\mathbb{R}^{2d}$, moreover, each term is uniformly bounded:
		\begin{align*}
			&\|(T^{(1)}_{\sqrt{\tau}\zeta}-I)\psi_{2d}\|^2_{L^2(\mathbb{R}^{2d})}
			\leq 2\|\psi_{2d}\|^2_{L^2(\mathbb{R}^{2d})}\,,
			\\
			&\|(T^{(2d)}_{\sqrt{\tau}(z-\sqrt{\tau}\zeta)\cdot_\sigma\zeta}-I)\psi_1\|_{L^2(\mathbb{R})}^2
			\leq 2\|\psi_1\|_{L^2(\mathbb{R})}^2\,.
		\end{align*}
		Thus, by the dominated convergence theorem we have
		\begin{align*}
			\|(S(\tau)-I)(\psi_{2d}\otimes\psi_1)\|_{L^2(\mathbb{H}^d)}^2
			\underset{\tau\to 0^+}{\longrightarrow}0\,.
		\end{align*}
		This proves that $S(\tau)$ is strongly continuous on $L^2(\mathbb{R}^{2d})\otimes L^2(\mathbb{R})$, which is dense in $L^2(\mathbb{H}^d)$.
		The uniform bound on the operator norm of $S(\tau)$ allows to extend this result to $L^2(\mathbb{H}^d)$.
		\item[\ref{Item: Chernoff conditions - approximation on a core})]
		We will prove Condition \ref{Item: Chernoff conditions - approximation on a core}) on $S(\mathbb{H}^d)$, which is a core for $L$.
		To this avail, let $\psi\in S(\mathbb{H}^d)$. Then
		\begin{multline*}
			\psi(z+\sqrt{\tau}\zeta,s+\sqrt{\tau}z\cdot_\sigma\zeta)
			=\psi(z,s)
			+\sqrt{\tau}[\zeta\partial_z+z\cdot_\sigma\zeta\partial_s]\psi(z,s)
			\\+\frac{\tau}{2}[\zeta\partial_z+z\cdot_\sigma\zeta\partial_s]^2\psi(z,s)
			+\frac{\tau^{3/2}}{6}[\zeta\partial_z+z\cdot_\sigma\zeta\partial_s]^3\psi(z,s)
			\\
			+\frac{1}{6}\int_0^{\sqrt{\tau}}
			\Big(\frac{\mathrm{d}}{\mathrm{d}\tilde s}\Big)^4
			\psi(z+\tilde s\zeta,s+\tilde sz\cdot_\sigma\zeta)
			(\sqrt{\tau}-\tilde s)^3\mathrm{d}\tilde s\,,
		\end{multline*}
		where $\zeta\partial_z=\zeta^i\partial_{z^i}$.
		Integrating the previous expansion with respect to $\mu_G$ leads to
		\begin{align}\label{representation-remainder}
			\int_{\mathbb{R}^{2d}}
\psi(z+\sqrt{\tau}\zeta,s+\sqrt{\tau}z\cdot_\sigma\zeta)
			\mathrm{d}\mu_G(\zeta)
			&=\psi(z,s)
			+\frac{\tau}{2}\int_{\mathbb{R}^{2d}}
			[\zeta\partial_z+z\cdot_\sigma\zeta\partial_s]^2\psi(z,s)
			\mathrm{d}\mu_G(\zeta)
			\\
   \nonumber
			&+\frac{1}{6}\int_{\mathbb{R}^{2d}}\int_0^{\sqrt{\tau}}
			\Big(\frac{\mathrm{d}}{\mathrm{d}\tilde s}\Big)^4
			\psi(z+\tilde s\zeta,s+\tilde sz\cdot_\sigma\zeta)
			(\sqrt{\tau}-\tilde s)^3\mathrm{d}\tilde s\mathrm{d}\mu_G(\zeta).
   \end{align}
   Now we claim that
   \begin{equation}\nonumber
   \int_{\mathbb{R}^{2d}}
			[\zeta\partial_z+z\cdot_\sigma\zeta\partial_s]^2\psi(z,s)
			\mathrm{d}\mu_G(\zeta)=L\psi(z,s).
   \end{equation}
   Indeed,
   \[
[\zeta\partial_z+z\cdot_\sigma\zeta\partial_s]^2=\zeta^i\zeta^j\partial_{z_iz_j}+2\zeta^iz\cdot_\sigma\zeta\partial_{sz_i}+(z\cdot_\sigma\zeta)^2\partial_{ss},
   \]
   Integrating the previous expression with respect to the Gaussian measure, using a symmetry argument and the explicit formulas for the moments of the Gaussian measure we get
   \begin{align*}
   &\int_{\mathbb{R}^{2d}}
			[\zeta\partial_z+z\cdot_\sigma\zeta\partial_s]^2\psi(z,s)\mathrm{d}\mu_G(\zeta)\\
   &=\int_{\mathbb{R}^{2d}}
			[\zeta^i\zeta^j\partial_{z_iz_j}+2\zeta^i z\cdot_\sigma\zeta\partial_{sz_i}+(z\cdot_\sigma\zeta)^2\partial_{ss}]\psi(z,s)\mathrm{d}\mu_G(\zeta)\\
   &=\partial_{z_iz_j}\psi(z,s)\int_{\mathbb{R}^{2d}}\zeta^i\zeta^j\mathrm{d}\mu_G(\zeta)+2\partial_{sz_i}\psi(z,s)\int_{\mathbb{R}^{2d}}\zeta^i z\cdot_\sigma\zeta\mathrm{d}\mu_G(\zeta)+\partial_{ss}\psi(z,s)\int_{\mathbb{R}^{2d}}(z\cdot_\sigma\zeta)^2
			\mathrm{d}\mu_G(\zeta)\\
   &=\partial_{z_iz_i}\psi(z,s)+2(z_{i+d}\partial_{z_is}-z_i\partial_{z_{i+d}s})\psi(z,s)+\|z\|^2\partial_{ss}\psi(z,s)\\
   &=L\psi(z,s)
   \end{align*}
   
		\noindent Overall we may write
		\begin{multline*}
			\left\|
			\frac{S(\tau)-I-\tau L}{\tau}\psi
			\right\|_{L^2(\mathbb{H}^d)}^2
			\\\leq
			\frac{\tau}{6}\int_{\mathbb{R}^{2d+1}}
			\bigg|\int_{\mathbb{R}^{2d}}\int_0^1
			[\zeta\partial_z+z\cdot_\sigma\zeta\partial_s]^4
			\psi(z+\sqrt{\tau} u\zeta,s+\sqrt{\tau} uz\cdot_\sigma\zeta)
			(1-u)^3\mathrm{d}u\mathrm{d}\mu_G(\zeta)\bigg|^2
			\mathrm{d}s\mathrm{d}^{2d}z
			\\
			\leq
			\frac{\tau}{24}\int_{\mathbb{R}^{2d+1}}
			\int_{\mathbb{R}^{2d}}\int_0^1
			|[\zeta\partial_z+z\cdot_\sigma\zeta\partial_s]^4
			\psi(z+\sqrt{\tau} u\zeta,s+\sqrt{\tau} uz\cdot_\sigma\zeta)|^2
			(1-u)^3\mathrm{d}u\mathrm{d}\mu_G(\zeta)
			\mathrm{d}s\mathrm{d}^{2d}z\,,
		\end{multline*}
		where we used again Jensen inequality with respect to the probability measure $4(1-u)^3\mathrm{d}u\mathrm{d}\mu_G(\zeta)$.
		We observe that if $\tau\in(0,1)$, then
		\begin{align*}
			\int_{\mathbb{R}^{2d+1}}
			|[\zeta\partial_z+z\cdot_\sigma\zeta\partial_s]^4
			\psi(z+\sqrt{\tau} u\zeta,s+\sqrt{\tau} uz\cdot_\sigma\zeta)|^2
			\mathrm{d}s\mathrm{d}^{2d}z
		\end{align*}
		can be bounded by a $\tau$-independent polynomial function in $u,\zeta$, which is integrable with respect to $4(1-u)^3\mathrm{d}u\mathrm{d}\mu_G(\zeta)$.
		Thus, it follows that
		\begin{align*}
			\left\|
			\frac{S(\tau)-I-\tau \frac{L}{2}}{\tau}\psi
			\right\|_{L^2(\mathbb{H}^d)}^2
			=O(\tau)\,,
		\end{align*}
		proving condition \ref{Item: Chernoff conditions - approximation on a core}).
	\end{description}
Let us now assume $c\neq 0$. Let $S_0$ denote the Chernoff function of $L$ with $c\equiv 0$ and let $S$ denote the analog for the case $c\neq 0$. If $\psi\in L^2(\mathbb{H}^d)$, we have $S(\tau)\psi=S_0(\tau)\psi+\tau c \psi\in L^2(\mathbb{H}^d)$ because $S_0(\tau)\psi \in L^2(\mathbb{H}^d)$, $\psi \in L^2(\mathbb{H}^d)$ and $c\in L^\infty(\mathbb{H}^d)$. This proves that $S$ is an operator on $L^{2}(\mathbb{H}^d)$, moreover $S$ is continuous because it is the sum of two continuous operators. Condition \ref{Item: Chernoff conditions - normalization}) is clearly satisfied. Now, 
\begin{align*}
\|S(\tau)\psi\|_{L^2(\mathbb{H}^d)}=\|S_0(\tau)\psi+\tau c \psi\|_{L^2(\mathbb{H}^d)}&\leq \|S_0(\tau)\|_{\mathfrak{B}(L^2(\mathbb{H}^d))}\|\psi\|_{L^2(\mathbb{H}^d)}+|\tau|\|c\psi\|_{L^2(\mathbb{H}^d)}
\\
&\leq (1+\tau{\|c\|_{L^\infty(\mathbb{H}^d)})}\|\psi\|_{L^2(\mathbb{H}^d)}\\
&\leq e^{\tau\|c\|_{L^\infty(\mathbb{H}^d)}}\|\psi\|_{L^2(\mathbb{H}^d)}.
\end{align*}
Finally, let $\psi\in C^{\infty}_c(\mathbb{H}^d)$, then exploiting Condition \ref{Item: Chernoff conditions - approximation on a core} in Theorem \ref{Chernoff-theorem} for $c=0$ we get
\begin{align*}
S_0(\tau)\psi=\psi+\tau \frac{L}{2}\psi + o(\tau)
\end{align*}
therefore
\begin{align*}
S(\tau)\psi=\psi+\tau L_c\psi + o(\tau)
\end{align*}
thus proving Condition \ref{Item: Chernoff conditions - approximation on a core} in Theorem \ref{Chernoff-theorem}.
\end{proof}
In the following theorem we show that Proposition \ref{Prop: Chernoff approximation for Euclidean Feynman integral - bound norm and strong continuity} continues to hold by replacing $L^2(\mathbb{H}^d)$ with $C_0(\mathbb{H}^d)$. The result will be applied in the section \ref{Sec: Construction of the Brownian motion on Hd as weak limit of random walks} to provide a new construction of the Brownian motion in the Heisenberg group.

\begin{theorem}\label{continuous}
Let $c\in C_b(\mathbb{H}^d)$. Then the operator defined in \eqref{Eq: Chernoff approximation for Euclidean Feynman integral} is such that for all $\tau\geq 0$ $S(\tau)(C_0(\mathbb{H}^d))\subset C_0(\mathbb{H}^d)$. 
	Furthermore,
	\begin{align}\label{convergence-cherrnoff-C0}
		\lim_{n\to\infty}\sup_{\tau\in[0,\tau_0]}
		\|S(\tau/n)\psi-e^{tL_c}\psi\|_{C_0(\mathbb{H}^d)}=0\,,
	\end{align}
	for all $\psi\in C_0(\mathbb{H}^d)$ and $\tau_0>0$ where $L_c$ is as in \eqref{operator2}.
\end{theorem}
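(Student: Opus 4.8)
The plan is to verify the four hypotheses of Chernoff's theorem (Theorem \ref{Chernoff-theorem}) for $S(\tau)$ on the Banach space $C_0(\mathbb{H}^d)$, mirroring the proof of Proposition \ref{Prop: Chernoff approximation for Euclidean Feynman integral - bound norm and strong continuity} but with sup-norm estimates in place of the $L^2$ ones. I would first dispose of the case $c\equiv 0$, writing $S_0$ for the corresponding operator, and then recover the general $c\in C_b(\mathbb{H}^d)$ at the end via $S(\tau)\psi=S_0(\tau)\psi+\tau c\psi$: since $c$ is bounded and continuous and $\psi\in C_0(\mathbb{H}^d)$, the product $c\psi$ again belongs to $C_0(\mathbb{H}^d)$, so the additive correction affects the four conditions only in the routine way already seen in the Proposition.

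For the mapping property $S_0(\tau)(C_0(\mathbb{H}^d))\subset C_0(\mathbb{H}^d)$, continuity of $S_0(\tau)\psi$ follows from dominated convergence, the integrand $(z,s)\mapsto\psi(z+\sqrt\tau\zeta,s+\sqrt\tau z\cdot_\sigma\zeta)$ being continuous and dominated by $\|\psi\|_\infty\in L^1(\mu_G)$. For the vanishing at infinity I would use that if $|(z,s)|\to\infty$ then the shifted point $(z+\sqrt\tau\zeta,s+\sqrt\tau z\cdot_\sigma\zeta)$ escapes every compact set: if $|z|$ is unbounded this is immediate, while if $|z|$ stays bounded then $|s|\to\infty$ and $z\cdot_\sigma\zeta$ remains bounded; dominated convergence then yields $[S_0(\tau)\psi](z,s)\to 0$. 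Conditions (i) and (ii) are immediate, since $\mu_G$ is a probability measure: $S_0(0)=I$ and $\|S_0(\tau)\|_{\mathfrak{B}(C_0(\mathbb{H}^d))}\le 1$, whence $\|S(\tau)\|\le 1+\tau\|c\|_{C_b(\mathbb{H}^d)}\le e^{\tau\|c\|_{C_b(\mathbb{H}^d)}}$, i.e. $\omega=\|c\|_{C_b(\mathbb{H}^d)}$.

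Strong continuity (condition (iii)) is where I expect the main obstacle. In $L^2$ one invokes strong continuity of the translation groups, but in the uniform norm this is unavailable because the $s$-displacement $\sqrt\tau\,z\cdot_\sigma\zeta$ is not uniformly small, growing with $|z|$. To prove $\|S_0(\tau)\psi-\psi\|_\infty\to 0$ as $\tau\to 0^+$ I would fix $\varepsilon>0$, pick $R$ with $|\psi|<\varepsilon$ outside the Euclidean ball $B_R$ and $K$ with $\mu_G(\{|\zeta|>K\})<\varepsilon$, and split the base point. On $\{|z|>R+1\}$ both $\psi(z,s)$ and the average are $O(\varepsilon)$: for $|\zeta|\le K$ and $\tau$ small the shifted point still lies outside $B_R$, while the tail $\{|\zeta|>K\}$ contributes at most $\|\psi\|_\infty\varepsilon$. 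On $\{|z|\le R+1\}$ the displacement is controlled, $|\sqrt\tau\,z\cdot_\sigma\zeta|\le\sqrt\tau(R+1)|\zeta|$, so the uniform continuity of $\psi$ (a $C_0(\mathbb{R}^{2d+1})$ function is uniformly continuous for the Euclidean metric) makes the increment uniformly small for $|\zeta|\le K$ and small $\tau$, the tail again being absorbed by the Gaussian. Continuity at a general $\tau'>0$ follows analogously, exactly as in the Proposition.

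For condition (iv) I would take the core $C_c^\infty(\mathbb{H}^d)$ for $L$, hence for $L_c$, recalled in Section 2, and reuse verbatim the Taylor expansion \eqref{representation-remainder} together with the identity $\int_{\mathbb{R}^{2d}}[\zeta\partial_z+z\cdot_\sigma\zeta\partial_s]^2\psi\,\mathrm{d}\mu_G=L\psi$ proved in the Proposition; these give $S_0(\tau)\psi=\psi+\tfrac{\tau}{2}L\psi+(\text{remainder})$, hence $S(\tau)\psi=\psi+\tau L_c\psi+(\text{remainder})$. The only new point is to bound the remainder in sup-norm. After the substitution $\tilde s=\sqrt\tau u$ it equals $\tfrac{\tau^2}{6}\int_{\mathbb{R}^{2d}}\int_0^1[\zeta\partial_z+z\cdot_\sigma\zeta\partial_s]^4\psi(z+\sqrt\tau u\zeta,s+\sqrt\tau u z\cdot_\sigma\zeta)(1-u)^3\,\mathrm{d}u\,\mathrm{d}\mu_G(\zeta)$. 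Because $\psi$ has compact support, this integrand vanishes unless the shifted point lies in $\operatorname{supp}\psi$, which forces $|z|\le C+\sqrt\tau|\zeta|$; for $\tau\in(0,1)$ this bounds the coefficients $(z\cdot_\sigma\zeta)^k$ by a $\tau$-independent polynomial in $|\zeta|$ times $\|\psi\|_{C^4}$, which is $\mu_G$-integrable. Hence $\|S(\tau)\psi-\psi-\tau L_c\psi\|_\infty=O(\tau^2)$ and condition (iv) holds. With all four hypotheses verified, Chernoff's theorem delivers \eqref{convergence-cherrnoff-C0}.
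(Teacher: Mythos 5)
Your proposal is correct, and it follows the same skeleton as the paper's proof: verify the four hypotheses of Theorem \ref{Chernoff-theorem} for $c\equiv 0$, then absorb the zero-order term via $S(\tau)\psi=S_0(\tau)\psi+\tau c\psi$ with the bound $\|S(\tau)\|\leq 1+\tau\|c\|_{C_b(\mathbb{H}^d)}\leq e^{\tau\|c\|_{C_b(\mathbb{H}^d)}}$, exactly as the paper does. The genuine divergence is in the two delicate verifications. For the $C_0$-mapping property and strong continuity (condition iii), the paper constructs ad hoc sets (the sets $A$, $B$, $A_{\epsilon,\tau_0}$ and the compact closure $\bar B$ of all shifted points) and invokes uniform continuity of $\psi$ on the compact $\bar B$, leaving the final $\delta$-extraction only sketched (``it is now possible to show\ldots''); you instead split the base point according to the size of $|z|$, use $|\sqrt{\tau}\,z\cdot_\sigma\zeta|\leq\sqrt{\tau}(R+1)|\zeta|$ on the bounded-$|z|$ region together with the \emph{global} uniform continuity of $C_0(\mathbb{R}^{2d+1})$ functions and a Gaussian tail cut at $|\zeta|\leq K$, and for vanishing at infinity you replace the paper's set construction by a clean sequential dominated-convergence argument (the dichotomy $|z_n|\to\infty$ versus $|z_n|$ bounded, $|s_n|\to\infty$ is sound, since $z_n\cdot_\sigma\zeta$ stays bounded in the second case). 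Your route is more elementary and actually more complete at the precise point the paper glosses over; the one place you owe a word is the claim that continuity at general $\tau'>0$ ``follows analogously'' --- it does, by the same $|z|$-split with the cutoff radius chosen relative to $R+\sqrt{\tau'+1}\,K$, but this should be stated since it is the case the paper treats in detail. For condition iv you work on the core $C^\infty_c(\mathbb{H}^d)$, which is in fact the core the paper explicitly records for the Feller generator in Section 2 (citing \cite{CarGor}), whereas the paper's own proof of Theorem \ref{continuous} invokes $S(\mathbb{H}^d)$; your compact-support localization $|z|\leq C+\sqrt{\tau}|\zeta|$ then bounds the sup-norm of the remainder more cheaply than the paper's Schwartz-decay argument, which must beat the polynomial growth in $z$ of the coefficients $P_\alpha$. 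Incidentally, your remainder is the correct fourth-order one, $\frac{\tau^2}{6}\int_{\mathbb{R}^{2d}}\int_0^1[\zeta\partial_z+z\cdot_\sigma\zeta\partial_s]^4\psi(z+\sqrt{\tau}u\zeta,s+\sqrt{\tau}uz\cdot_\sigma\zeta)(1-u)^3\,\mathrm{d}u\,\mathrm{d}\mu_G(\zeta)$, obtained from \eqref{representation-remainder} after the third-order term is killed by the odd Gaussian moments; the paper's displayed $r_\tau$ carries a typographical third power and a sum over $|\alpha|=3$, which your computation silently corrects.
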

\begin{proof}

       Let us begin by assuming \( c = 0 \).  
The continuity of the mapping \( (z,s) \mapsto [S(\tau)\psi](z,s) \) follows from the continuity of the function \( (z,s) \mapsto \psi(z + \sqrt{\tau} \zeta, s + \sqrt{\tau} z \cdot_\sigma \zeta) \) and the dominated convergence theorem, given that \( \psi \in C_0(\mathbb{H}^d) \) by assumption.\\
    Let us now show that the function \( S(\tau)\psi \) vanishes at infinity.  
Given \( \epsilon > 0 \), let \( R > 0 \) be such that \( \mu_G(B_R(0)^c) < \frac{\epsilon}{2\|\psi\|_\infty} \), where \( B_R(0) \) denotes the ball in \( \mathbb{R}^{2d} \) centered at \( 0 \) with radius \( R \).  
We then obtain:  
\[
\big|[S(\tau)\psi](z,s)\big| \leq \int_{B_R(0)} \big|\psi(z+\sqrt{\tau} \zeta, s+\sqrt{\tau} z\cdot_\sigma\zeta)\big| \,\mathrm{d}\mu_G(\zeta) + \frac{\epsilon}{2}.
\]
Since \( \psi \in C_0(\mathbb{H}^d) \), there exists a compact set \( K_\epsilon \subset \mathbb{R}^{2d+1}\) such that \( |\psi(z,s)| < \frac{\epsilon}{2} \) for all \( (z,s) \in K_\epsilon^c \).
    Let us now consider the set $A\subset \bR^{2d+1}$ defined as:
    \begin{equation}
        A:=\{(z,s)\in \bR^{2d+1}\colon \forall \zeta \in  B_R(0), \, (z+\sqrt{\tau}\zeta,s+\sqrt{\tau} z\cdot_\sigma\zeta)\in K_\epsilon^c \}
    \end{equation}
    and its complementary $B:=A^c$ given by
    \begin{equation}
        B=\{(z,s)\in \bR^{2d+1}\colon \exists \zeta \in  B_R(0),\, (z+\sqrt{\tau}\zeta,s+\sqrt{\tau} z\cdot_\sigma\zeta)\in K_\epsilon \}.
    \end{equation}
    Clearly, $B$ is a bounded set, hence its closure $K_{\epsilon,\tau}:=\overline{B}$ is compact and by definition of $B$ we have that for any $(z,s)\in \overline{B}^c\subset A$ we have 
    \[|\psi(z+\sqrt{\tau}\zeta,s+\sqrt{\tau} z\cdot_\sigma\zeta)|<\epsilon \quad \forall \zeta\in B_R(0)\,, \]
    and we eventually get
    \[|[S(\tau)\psi](z,s)|<\epsilon \quad \forall (z,s)\in  \overline{B}^c\,.\]
    
   
   In order to prove \eqref{convergence-cherrnoff-C0} it is sufficient to show that the map $S\colon\mathbb{R}_+\to\mathfrak{B}(C_0(\mathbb{H}^d))$ satisfies the four hypothesis of Theorem \ref{Chernoff-theorem}.\\
    First of all, the condition $S(0)=I$ as well as the inequality $\|S(\tau)\|_{\mathfrak{B}(C_0(\mathbb{H}^d))}\leq 1$ for all $\tau\geq 0$ follow by direct computation. Let us now prove that the map $\tau \mapsto S(\tau) $ is strongly continuous. Let us fix a $\tau_0\in \bR^+$ and consider for a generic $\psi \in C_0(\mathbb{H}^d)$, then
    \begin{align*}
        &
        \|S(\tau)\psi -S(\tau_0)\psi\|_{C_0(\mathbb{H}^d)}\\&=\sup_{(z,s)\in \bR^{2d+1}}|S(\tau)\psi(z,s) -S(\tau_0)\psi(z,s)|\\
        &\leq \sup_{(z,s)\in \bR^{2d+1}}\int_{\bR^{2d+1}}|\psi(z+\sqrt{\tau}\zeta,s+\sqrt{\tau} z\cdot_\sigma\zeta)-\psi(z+\sqrt{\tau_0}\zeta,s+\sqrt{\tau_0} z\cdot_\sigma\zeta)|\mathrm{d}\mu_G(\zeta)\,.
    \end{align*}
    Fixed $\epsilon >0$, let us pick $R>0$ such that $\mu_G(B_R(0)^c)<\frac{\epsilon}{4\|\psi\|}$, in such a way that 
    \begin{multline*}
        \int_{\bR^{2d+1}}|\psi(z+\sqrt{\tau}\zeta,s+\sqrt{\tau} z\cdot_\sigma\zeta)-\psi(z+\sqrt{\tau_0}\zeta,s+\sqrt{\tau} z\cdot_\sigma\zeta)|\mathrm{d}\mu_G(\zeta)\\
        \leq\int_{B_R(0)}|\psi(z+\sqrt{\tau}\zeta,s+\sqrt{\tau} z\cdot_\sigma\zeta)-\psi(z+\sqrt{\tau_0}\zeta,s+\sqrt{\tau} z\cdot_\sigma\zeta)|\mathrm{d}\mu_G(\zeta)+\frac{\epsilon}{2}.
    \end{multline*}
    Further, let us consider the compact set $K_\epsilon\subset \bR^{2d+1}$ such that $|\psi(z,s)|<\epsilon /4$ for all $(z,s)\in K_\epsilon^c$. Considered the set $A_{\epsilon,\tau_0}\subset \bR^{2d+1}$ defined as
    $$A_{\epsilon,\tau_0}:=\{(z,s)\in \bR^{2d+1}\colon (z+\sqrt{\tau}\zeta,s+\sqrt{\tau} z\cdot_\sigma\zeta)\in K_\epsilon^c\,, \forall \zeta \in B_R(0), \tau\in [0\wedge (\tau_0-1), \tau_0+1]\}$$
    we have that 
    $$\int_{B_R(0)}|\psi(z+\sqrt{\tau}\zeta,s+\sqrt{\tau} z\cdot_\sigma\zeta)-\psi(z+\sqrt{\tau_0}\zeta,s+\sqrt{\tau} z\cdot_\sigma\zeta)|\mathrm{d}\mu_G(\zeta)<\epsilon/2$$
    for all $(z,s)\in A_{\epsilon,\tau_0}$. Moreover, the set $A_{\epsilon,\tau_0}^c$ given by:
    $$A_{\epsilon,\tau_0}^c:=\{(z,s)\in \bR^{2d+1}\colon \exists \zeta \in B_R(0), \tau\in [0\wedge (\tau_0-1), \tau_0+1] \,, (z+\sqrt{\tau}\zeta,s+\sqrt{\tau} z\cdot_\sigma\zeta)\in K_\epsilon\}$$
    is bounded by construction. Hence, the subset $B\subset \bR^{2d+1}$ of points $(z,s) \in  \bR^{2d+1}$ of the form
    $$(z,s)=(z'+\sqrt{\tau}\zeta,s'+\sqrt{\tau} z'\cdot_\sigma\zeta)$$
    for some $(z',s')\in A_{\epsilon,\tau_0}^c $, $\zeta\in B_R(0)$ and $\tau\in  [0\wedge (\tau_0-1), \tau_0+1] $ is bounded, and its closure $\bar B$ is compact. 
    If we now consider a generic point $(z,t)\in A_{\epsilon,\tau_0}^c$, by the uniform continuity of the map $\psi:\bar B\to \bC$ and the continuity of the map $\tau\mapsto (z+\sqrt{\tau}\zeta,s+\sqrt{\tau} z\cdot_\sigma\zeta)$, it is now possible to show that there exists a $\delta>0  $
such that if $\tau\in [0\wedge (\tau_0-1), \tau_0+1] $ and $|\tau-\tau_0|<\delta$ we have  $$|\psi(z+\sqrt{\tau}\zeta,s+\sqrt{\tau} z\cdot_\sigma\zeta)-\psi(z+\sqrt{\tau_0}\zeta,s+\sqrt{\tau} z\cdot_\sigma\zeta)|<\epsilon/2$$ hence 
$|S(\tau)\psi (z,s)-S(\tau_0)\psi(z,s)|<\epsilon$.\\
We can now prove condition \ref{Item: Chernoff conditions - approximation on a core} of theorem \ref{Chernoff-theorem} for $\psi \in S(\mathbb{H}^d)$, which is a core for $L$. By Eq. \eqref{representation-remainder} we have:
\begin{equation*}
S(\tau)\psi(z,s)-\psi(z,s)-\tau \frac{L}{2}\psi (z,s)= 
    \frac{\tau^2}{6}r_\tau(z,s)\end{equation*}
    where 
    \begin{align*}\nonumber
   r_\tau(z,s)&=
    \int_{\mathbb{R}^{2d}}\int_0^1
			[\zeta\partial_z+z\cdot_\sigma\zeta\partial_s]^3
			\psi(z+\sqrt{\tau} u\zeta,s+\sqrt{\tau} uz\cdot_\sigma\zeta)
			(1-u)^3\mathrm{d}u\mathrm{d}\mu_G(\zeta)\\
   &=\sum_{\mathbb{\alpha}}\int_{\mathbb{R}^{2d}}\int_0^1
			 P_{\mathbb{\alpha}}(z,s,u,\zeta,\sqrt \tau)
			\psi^{(\mathbb{\alpha})}(z+\sqrt{\tau} u\zeta,s+\sqrt{\tau} uz\cdot_\sigma\zeta)
			\mathrm{d}u\mathrm{d}\mu_G(\zeta)\,. 
   \end{align*}
 where the sum is taken on multindex $\alpha=(\alpha_1,\alpha_2,\alpha_3)\in \bN^3$ such that $\alpha_1+\alpha_2+\alpha_3=3$, while $P_\alpha$ denotes a suitable polynomial function and $\psi^{(\alpha)}$ the $\alpha$-order derivative of $\psi$. By the assumption $\psi\in S(\mathbb{H}^d)$, the constraint $u\in [0,1]$ and the existence of all moments of the Gaussian measure $\mu_G$, it is now straightforward to check that 
 $\|r_\tau\|_{C_0(\mathbb{H}^d)}$ is finite for $\tau $ belonging to an arbitrary bounded interval, hence condition \ref{Item: Chernoff conditions - approximation on a core} of theorem \ref{Chernoff-theorem} is fulfilled.\\
Let us now consider $c\neq 0$. Proceeding as in Proposition \ref{Prop: Chernoff approximation for Euclidean Feynman integral - bound norm and strong continuity} we denote by $S_0$ the Chernoff function of $L$ with $c\equiv 0$ and let $S$ denote the analog for the case $c\neq 0$. If $\psi\in C_0(\mathbb{H}^d)$, we have $S(\tau)\psi=S_0(\tau)\psi+\tau c \psi\in C_0(\mathbb{H}^d)$ because $S_0(\tau)\psi \in C_0(\mathbb{H}^d)$, $\psi \in C_0(\mathbb{H}^d)$ and $c\in C_{b}(\mathbb{H}^d)$. This proves that $S$ is an operator on $C_0(\mathbb{H}^d)$, moreover $S$ is continuous because it is the sum of two continuous operators. Condition \ref{Item: Chernoff conditions - normalization} is clearly satisfied. Now, proceeding exactly as in Proposition \ref{Prop: Chernoff approximation for Euclidean Feynman integral - bound norm and strong continuity} we have 
\begin{align*}
\|S(\tau)\psi\|_{C_0(\mathbb{H}^d)}\leq e^{\tau{\|c\|_{C_b(\mathbb{H}^d)}}}\|\psi\|_{C_0(\mathbb{H}^d)}.
\end{align*}
and for every $\psi\in S(\mathbb{H}^d)$
\begin{align*}
S(\tau)\psi=\psi+\tau L_c\psi + o(\tau)
\end{align*}

thus proving Conditions \ref{Item: Chernoff conditions - norm bound} and  \ref{Item: Chernoff conditions - approximation on a core} in Theorem \ref{Chernoff-theorem}.
\end{proof}
By remark \ref{remark Classical Solution} we get the following result.
\begin{theorem} Under assumptions of Theorem \ref{continuous},  the classical solution of the Cauchy problem 
\begin{equation}\label{CauchyProblemHeat}
 \left\{ \begin{array}{l}
 \frac{\partial }{\partial t}u(t,x)=L_c u(t,x)\\
 u(0,x)=u_0(x)
 \end{array}
 \right.
 \end{equation}
 is given for $u_0\in D(L)$ by \begin{equation*}\label{C-M-1}u(t,x)=\lim_{n\to\infty}(S(t/n)^{n }u_0)(x), \end{equation*} where  $S(t)$  is defined in (\ref{Eq: Chernoff approximation for Euclidean Feynman integral}).
  \end{theorem}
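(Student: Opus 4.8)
The plan is to combine the Chernoff equivalence established in Theorem \ref{continuous} with the abstract semigroup characterization of classical solutions recalled in Remark \ref{remark Classical Solution}. First I would observe that, since $c\in C_b(\mathbb{H}^d)$, the preliminary discussion around \eqref{operator2} guarantees that $L_c\colon D(L)\subset C_0(\mathbb{H}^d)\to C_0(\mathbb{H}^d)$ is the generator of a strongly continuous semigroup $(e^{tL_c})_{t\geq 0}$ and that $D(L_c)=D(L)$, the multiplication operator by $c$ being bounded. Consequently the concrete initial value problem \eqref{CauchyProblemHeat} is precisely the abstract Cauchy problem \eqref{ACP1} for the operator $L_c$ on the Banach space $C_0(\mathbb{H}^d)$.

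Next I would invoke the general theory of $C_0$-semigroups: since $u_0\in D(L)=D(L_c)$, Proposition 6.2 in \cite{EN1} (as recalled in the paragraph preceding Theorem \ref{Chernoff-theorem}) yields that $t\mapsto u(t):=e^{tL_c}u_0$ is the \emph{unique} classical solution of \eqref{CauchyProblemHeat}, i.e.\ it is continuously differentiable in $C_0(\mathbb{H}^d)$, satisfies $u(t)\in D(L_c)$ for all $t\geq 0$, and solves the equation with the prescribed initial datum.

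Finally I would apply Theorem \ref{continuous}, which asserts that the map $S$ defined in \eqref{Eq: Chernoff approximation for Euclidean Feynman integral} is a Chernoff function for $L_c$ on $C_0(\mathbb{H}^d)$; in particular the convergence \eqref{convergence-cherrnoff-C0} gives $\lim_{n\to\infty}S(t/n)^n u_0=e^{tL_c}u_0$ in $C_0(\mathbb{H}^d)$ for every $t\geq 0$. Identifying this limit with the classical solution found above produces the claimed representation $u(t,x)=\lim_{n\to\infty}(S(t/n)^n u_0)(x)$. Strictly speaking no genuine obstacle arises here, as the statement is a direct corollary of the two cited facts; the only point deserving a moment of care is the identification $D(L_c)=D(L)$, which ensures that $u_0\in D(L)$ legitimately lies in the domain of the generator $L_c$, so that Remark \ref{remark Classical Solution} applies verbatim with $L$ replaced by $L_c$.
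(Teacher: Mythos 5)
Your proposal is correct and follows essentially the same route as the paper, which derives the theorem directly from Remark \ref{remark Classical Solution} combined with the Chernoff equivalence of Theorem \ref{continuous}; you merely make explicit the steps the paper leaves implicit (that $L_c$ generates a $C_0$-semigroup on $C_0(\mathbb{H}^d)$ with $D(L_c)=D(L)$, and the uniqueness of the classical solution via Proposition 6.2 of \cite{EN1}). The observation that $D(L_c)=D(L)$, so that $u_0\in D(L)$ lies in the generator's domain, is exactly the point the paper relies on when invoking the remark.
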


\section{The Schr\"odinger equation on $\bH^{d}$ }
Let us consider the Schr\"odinger equation on $L^2(\bH^{d})$, i.e., the abstract Cauchy problem 
\begin{equation}\label{Schrodinger}
   \begin{cases}
       i\partial_t\psi =H\psi \,, \\
       \psi (0)=\psi_0\,.
   \end{cases} 
\end{equation}
In \eqref{Schrodinger}, $\psi_0\in L^2(\bH^{d})$ is the initial datum, while $H:D(H)\subset L^2(\bH^{d})\to L^2(\bH^{d})$ is the self-adjoint operator defined as the closure of the symmetric operator $H_0:C^\infty_0(\bH^{d})\subset L^2(\bH^d)\to L^2(\bH^{d})$, given by  
\begin{equation*}
    H_0\psi (z,s)=-\frac{L}{2}\psi (z,s) +v(z,s)\psi(z,s)\,, \qquad (z,s)\in \bH^{d}\,,
\end{equation*}
where $L\psi$ is given by Eq. \eqref{formalap}, and $v\in L^\infty(\bH^d)$ is a  bounded potential. As discussed in Section \ref{subsection-Heisenberg-group} for the operator \ref{operator2}, $H_0$ is essentially self-adjoint and in the following we shall denote with the simbol $H=\bar{H}_0$ its closure, i.e. its  unique self-adjoint extension \cite{ReSi,Str86}. Moreover,  
\begin{equation}\label{H}
    D(H)=D(L), \qquad H\psi =-\frac{L}{2}\psi +v\psi\,.
\end{equation}
and $S(\mathbb{H}^d)$ is a core.


Let us denote by $\{U(t)\}_{t\in \bR}$ the strongly continuous group of unitary operators $U(t):L^2(\bH^{d})\to L^2(\bH^{d})$, constructed via functional calculus and the spectral measure of the self-adjoint operator $H$, given by $U(t)=e^{-iHt}$. In particular, if $\psi_0\in D(H)$, then $\psi(t)\equiv U(t)\psi_0$ belongs to $D(H)$ for all $t \in \bR$ and is a classical solution of the abstract Cauchy problem \eqref{Schrodinger}. That is, for all $t\in \bR$, the map $t \mapsto \psi (t)$ has a continuous derivative (in the topology of $L^2(\bH^{d})$), $\partial_t \psi :\bR\to L^2(\bH^{d})$, and satisfies \eqref{Schrodinger}.

\subsection{Costruction of a Chernoff function for the unitary Schr\"odinger group}\label{subsection-Fey1}
Let us consider the map $S\colon\mathbb{R}_+\to\mathfrak{B}(L^2(\mathbb{H}^d)$ defined on $S(\mathbb{H}^d)$ by: 
\begin{align}\label{Eq: Chernoff approximation for Feynman integral - first definition}
	[S(\tau)\psi](z,s)
	:=\frac{1}{(2\pi i)^d}\int^o_{\mathbb{R}^{2d}}e^{i|\zeta|^2/2}\psi(z+\sqrt{\tau}\zeta,s+\sqrt{\tau} z\cdot_\sigma\zeta)\mathrm{d}^{2d}\zeta\,,
\end{align}
where the integral is meant as an oscillatory integral \cite{Hor}. More specifically, the right-hand side of \eqref{Eq: Chernoff approximation for  Feynman integral} is defined as the limit
\begin{equation}\label{limit definition oscillatory integral 1}
    \lim_{\epsilon\downarrow 0} \frac{1}{(2\pi i)^d}\int_{\mathbb{R}^{2d}}\phi(\epsilon \zeta)e^{i|\zeta|^2/2}\psi(z+\sqrt{\tau}\zeta,s+\sqrt{\tau} z\cdot_\sigma\zeta)\mathrm{d}^{2d}\zeta\,,
\end{equation}
where $\phi\in S(\mathbb{H}^d)$ is such that $\phi (0)=1$ and the limit \eqref{limit definition oscillatory integral 1} is required to be independent on the particular choice of $\phi$.
\begin{lemma}\label{lemmaRepS-tau-Schrodinger}
    Let $\psi \in S(\mathbb{H}^d)$. Then the oscillatory integral \eqref{Eq: Chernoff approximation for  Feynman integral} is well defined and can be represented as
\begin{equation}\label{Representation S-tau-Schroedinger case}
   [S(\tau)\psi](z,s)=\int_{\bR^{2d+1}}e^{i(\eta \cdot z +\alpha s)}e^{-\frac{i}{2}\tau |\eta +\alpha \tilde z|^2} \hat \psi(\eta, \alpha)\mathrm{d}^{2d}\eta\mathrm{d}\alpha\,,
\end{equation}
where, for $z=(z_1,z_2)\in\mathbb{R}^{2d}$, $\tilde z:=(z_2,-z_1)$, while $\hat \psi $ is the (Euclidean) Fourier transform of $\psi$, i.e.
\begin{equation*}
\hat \psi(\eta, \alpha)=\frac{1}{(2\pi)^{2d+1}}\int_{\bR^{2d+1}}e^{-i(\eta \cdot z+\alpha s)}\psi (z,s)\mathrm{d}^{2d}z\mathrm{d}s\,.
\end{equation*} 
\end{lemma}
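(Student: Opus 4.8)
The plan is to reduce everything to a (regularized) Fresnel integral by inserting the Fourier inversion formula for $\psi$ and exchanging the order of integration. First I would record the elementary but crucial algebraic identity that, writing $z=(z_1,z_2)\in\mathbb{R}^d\times\mathbb{R}^d$ and $\zeta=(\zeta_1,\zeta_2)$, the definition $z\cdot_\sigma\zeta=\zeta_1\cdot z_2-z_1\cdot\zeta_2$ can be rewritten as $z\cdot_\sigma\zeta=\tilde z\cdot\zeta$ with $\tilde z=(z_2,-z_1)$. Then, substituting the inversion formula $\psi(w,\sigma)=\int_{\mathbb{R}^{2d+1}}e^{i(\eta\cdot w+\alpha\sigma)}\hat\psi(\eta,\alpha)\,\mathrm{d}^{2d}\eta\,\mathrm{d}\alpha$ at the point $(w,\sigma)=(z+\sqrt\tau\zeta,\,s+\sqrt\tau\,\tilde z\cdot\zeta)$ and collecting the $\zeta$-dependent exponentials, the phase in $\zeta$ becomes linear, namely $e^{i\sqrt\tau(\eta+\alpha\tilde z)\cdot\zeta}$, so that the whole integrand carries a factor $e^{i(\eta\cdot z+\alpha s)}$ independent of $\zeta$.

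Next I would work at the regularized level \eqref{limit definition oscillatory integral 1}, i.e.\ with the cutoff $\phi(\epsilon\zeta)$ in place, for which the double integral over $(\zeta,\eta,\alpha)$ is absolutely convergent: indeed $\phi(\epsilon\,\cdot)\in L^1(\mathbb{R}^{2d})$ because $\phi$ is Schwartz, and $\hat\psi\in L^1(\mathbb{R}^{2d+1})$. Fubini's theorem then lets me carry out the $\zeta$-integration first, yielding
$$[S(\tau)\psi]_\epsilon(z,s)=\int_{\mathbb{R}^{2d+1}}e^{i(\eta\cdot z+\alpha s)}\hat\psi(\eta,\alpha)\,F_\epsilon(\eta,\alpha)\,\mathrm{d}^{2d}\eta\,\mathrm{d}\alpha,$$
where $F_\epsilon(\eta,\alpha)=\frac{1}{(2\pi i)^d}\int_{\mathbb{R}^{2d}}\phi(\epsilon\zeta)\,e^{i|\zeta|^2/2}\,e^{i\sqrt\tau(\eta+\alpha\tilde z)\cdot\zeta}\,\mathrm{d}^{2d}\zeta$ is a regularized Fresnel integral with linear phase and $b:=\sqrt\tau(\eta+\alpha\tilde z)$.

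The core computation is to show $\lim_{\epsilon\downarrow0}F_\epsilon(\eta,\alpha)=e^{-\frac{i}{2}|b|^2}=e^{-\frac{i}{2}\tau|\eta+\alpha\tilde z|^2}$, i.e.\ the normalized-Fresnel evaluation $\frac{1}{(2\pi i)^d}\int e^{i|\zeta|^2/2+ib\cdot\zeta}\,\mathrm{d}^{2d}\zeta=e^{-i|b|^2/2}$, obtained by completing the square $\frac{i}{2}|\zeta|^2+ib\cdot\zeta=\frac{i}{2}|\zeta+b|^2-\frac{i}{2}|b|^2$ together with $\frac{1}{(2\pi i)^d}\int e^{i|\zeta|^2/2}\,\mathrm{d}^{2d}\zeta=1$. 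The quickest rigorous route is to choose the Gaussian cutoff $\phi(\zeta)=e^{-|\zeta|^2/2}$: then $F_\epsilon$ is a genuine Gaussian integral, equal to $(1+i\epsilon^2)^{-d}\exp\!\big(-\tfrac{|b|^2}{2(\epsilon^2-i)}\big)$, from which both the uniform bound $|F_\epsilon|\le1$ and the limit $e^{-i|b|^2/2}$ are immediate; independence of the admissible $\phi$ is the standard property of oscillatory integrals \cite{Hor}, and for $\tau>0$ it is even automatic, since $\zeta\mapsto\psi(z+\sqrt\tau\zeta,s+\sqrt\tau\,\tilde z\cdot\zeta)$ is then Schwartz and the integral converges absolutely. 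Finally I would pass the limit $\epsilon\downarrow0$ under the $(\eta,\alpha)$-integral by dominated convergence, using $|F_\epsilon|\le1$ and the integrable majorant $|\hat\psi|$; this yields simultaneously the well-definedness of the oscillatory integral and the representation \eqref{Representation S-tau-Schroedinger case}.

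The main obstacle is the bookkeeping of the three interchanged limits — the $\zeta$-integration, the Fourier integration in $(\eta,\alpha)$, and the regularization $\epsilon\downarrow0$ — because the unregularized $\zeta$-integral $\int e^{i|\zeta|^2/2}e^{ib\cdot\zeta}\,\mathrm{d}^{2d}\zeta$ is only conditionally convergent, so Fubini cannot be applied to it directly. Keeping the cutoff $\phi(\epsilon\zeta)$ throughout, so that Fubini is legitimate at each fixed $\epsilon$, and removing it only at the very end via a uniform-in-$\epsilon$ bound on $F_\epsilon$, is what makes the argument rigorous; establishing that uniform bound together with the $\phi$-independence of the limit is the technical heart of the proof.
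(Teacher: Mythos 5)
Your proof is correct and follows the same scaffolding as the paper's: insert the Fourier inversion formula at the shifted point, use $z\cdot_\sigma\zeta=\tilde z\cdot\zeta$, apply Fubini at the regularized level (legitimate since $\phi(\epsilon\,\cdot)\in L^1(\mathbb{R}^{2d})$ and $\hat\psi\in L^1(\mathbb{R}^{2d+1})$), evaluate the inner Fresnel integral with linear phase, and remove the cutoff by dominated convergence against the majorant $|\hat\psi|$. The one genuine divergence is in the technical heart. The paper evaluates the inner integral by Plancherel for an \emph{arbitrary} admissible cutoff, rewriting it as $\int_{\mathbb{R}^{2d}} e^{-\frac{i}{2}|\epsilon k+\sqrt{\tau}(\eta+\alpha\tilde z)|^2}\hat\phi(k)\,\mathrm{d}^{2d}k$ and letting $\epsilon\downarrow 0$ with $\int\hat\phi(k)\,\mathrm{d}^{2d}k=\phi(0)=1$; this yields the value $e^{-\frac{i}{2}\tau|\eta+\alpha\tilde z|^2}$ and the $\phi$-independence demanded by the definition \eqref{limit definition oscillatory integral 1} in a single stroke. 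You instead compute $F_\epsilon$ in closed form for the Gaussian cutoff and patch in $\phi$-independence separately; the patch does hold: for $\tau>0$ the affine map $\zeta\mapsto(z+\sqrt{\tau}\zeta,\,s+\sqrt{\tau}\,\tilde z\cdot\zeta)$ has injective linear part, so the pullback of $\psi$ is Schwartz in $\zeta$, the $\zeta$-integral converges absolutely, and dominated convergence makes the regularized limit manifestly independent of any bounded cutoff converging pointwise to $1$, while at $\tau=0$ the claim reduces to the standard normalization $\frac{1}{(2\pi i)^d}\int^o_{\mathbb{R}^{2d}} e^{i|\zeta|^2/2}\,\mathrm{d}^{2d}\zeta=1$, covered by your citation. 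Your Gaussian route buys an explicit formula $(1+i\epsilon^2)^{-d}\exp\bigl(-|b|^2/(2(\epsilon^2-i))\bigr)$ with the clean uniform bound $|F_\epsilon|\le 1$; the paper's Plancherel route buys uniformity over all cutoffs with no case distinction in $\tau$, which is the more natural fit for how \eqref{limit definition oscillatory integral 1} quantifies over $\phi$. The final dominated-convergence step in the $(\eta,\alpha)$ variables is identical in both arguments.
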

\begin{proof}
Using the Fourier inversion formula 
\begin{equation*}
\psi (z,s)=\int_{\bR^{2d+1}}e^{i(\eta \cdot z+\alpha s)}\hat \psi(\eta, \alpha)\mathrm{d}^{2d}\eta\mathrm{d}\alpha\,,
\end{equation*} 
the limit \eqref{limit definition oscillatory integral 1} becomes:
\begin{equation*}
     \lim_{\epsilon\downarrow 0} \frac{1}{(2\pi i)^d}\int_{\mathbb{R}^{2d}}e^{i|\zeta|^2/2}\phi(\epsilon \zeta)\int_{\bR^{2d+1}}e^{i\eta \cdot (z+\sqrt{\tau}\zeta)}e^{i\alpha (s+\sqrt{\tau} z\cdot_\sigma\zeta)}\hat \psi(\eta, \alpha)\mathrm{d}^{2d}\eta\mathrm{d}\alpha\mathrm{d}^{2d}\zeta\,.
     \end{equation*}
    By Fubini theorem, the latter is equal to:
    \begin{equation*}
     \lim_{\epsilon\downarrow 0} \int_{\bR^{2d+1}} e^{i(\eta \cdot z+\alpha s)}\hat \psi(\eta, \alpha)
     \int_{\mathbb{R}^{2d}}\frac{e^{i|\zeta|^2/2}}{(2\pi i)^d}\phi(\epsilon \zeta)e^{i\sqrt{\tau}(\eta \cdot \zeta+\alpha \tilde z\cdot \zeta)}\mathrm{d}^{2d}\zeta\mathrm{d}^{2d}\eta\mathrm{d}\alpha\,.
     \end{equation*}
     By applying Plancherel's equality to the inner integral we get
     \begin{align*}
    & \lim_{\epsilon\downarrow 0} \int_{\bR^{2d+1}} e^{i(\eta \cdot z+\alpha s)}\hat \psi(\eta, \alpha)
     \int_{\mathbb{R}^{2d}}e^{-\frac{i}{2}|k+\sqrt \tau(\eta +\alpha \tilde z)|^2}\hat \phi(k/\epsilon)\epsilon^{-2d}\mathrm{d}^{2d}k\mathrm{d}^{2d}\eta\mathrm{d}\alpha\\
     =&\lim_{\epsilon\downarrow 0} \int_{\bR^{2d+1}} e^{i(\eta \cdot z+\alpha s)}\hat \psi(\eta, \alpha)
     \int_{\mathbb{R}^{2d}}e^{-\frac{i}{2}|\epsilon k+\sqrt \tau(\eta +\alpha \tilde z)|^2}\hat \phi(k)\mathrm{d}^{2d}k\mathrm{d}^{2d}\eta\mathrm{d}\alpha\,
     \end{align*}
     and by dominated convergence theorem and the condition $\int_{\mathbb{R}^{2d}}\hat \phi(k)\mathrm{d}^{2d}k=\phi(0)=1$ we eventually get the thesis.
     \end{proof}

    Lemma \ref{lemmaRepS-tau-Schrodinger} allows us to show that for any $\tau \in\mathbb{R}_+$, the operator $S(\tau)$ extends to a bounded operator on $L^2(\mathbb{H}^d)$, mapping $S(\mathbb{H}^d)$ into itself.
Indeed, using Equation \eqref{Representation S-tau-Schroedinger case} and expressing $\psi$ in terms of its Fourier transform with respect to the variable $s$, we can represent the action of $S(\tau)$ as the composition of bounded linear operators that map $S(\mathbb{H}^d)$ into itself.
     More specifically,  for $\psi \in S(\mathbb{H}^d)$ Equation \eqref{Representation S-tau-Schroedinger case} yields:
\begin{align}\label{Representation S-tau-Schroedinger case-2}
    S(\tau)\psi(z,\alpha)&=
    e^{-\frac{i}{2}\tau|\alpha \tilde z|^2}
    \int_{\bR^{2d}}e^{i\eta \cdot(z-\tau\alpha\tilde{z})}
    e^{-\frac{i}{2}\tau |\eta|^2}
    \hat \psi(\eta, \alpha)\mathrm{d}^{2d}\eta\\
    &\equiv e^{-\frac{i}{2}\tau|\alpha \tilde z|^2} \Psi_\tau(z-\tau\alpha\tilde{z},\alpha)\,,\nonumber
\end{align}     
where $\Psi_\tau(z,\alpha)$ denotes the $z$-Fourier transform of the map $(\eta,\alpha)\mapsto e^{-\frac{i}{2}\tau |\eta|^2}\hat \psi(\eta, \alpha)\in S(\mathbb{H}^d)$. Hence, by direct inspection, $ S(\tau)\psi\in  S(\mathbb{H}^d)$. Moreover, it is possible to represent $S(\tau): S(\mathbb{H}^d)\to  S(\mathbb{H}^d)$ as the compositions of three unitary operators on $L^2(\mathbb{H}^d)$, i.e. 
\begin{equation}\label{S-tau-representation}
    S(\tau)=U_2(\tau)V(\tau)U_1(\tau)
\end{equation} where:
\begin{itemize}
    \item  $U_1:L^2(\mathbb{H}^d)\to L^2(\mathbb{H}^d)$ is  defined on $\psi\in S(\mathbb{H}^d)$ as
    \begin{equation}\label{U1}
         U_1(\tau)\psi (z,\alpha)=  \int_{\bR^{2d}} e^{i\eta \cdot z}e^{-\frac{i}{2}\tau |\eta|^2}
    \hat \psi(\eta, \alpha)\mathrm{d}^{2d}\eta
    \end{equation}
    which is  unitary, as associated, via Fourier transform, to a unitary multiplication operator.
    \item $U_2(\tau)$ is the  unitary multiplication operator  defined as
    \begin{equation}\label{U2}
        U_2(\tau)\psi (z,\alpha)=e^{-\frac{i}{2}\tau |\alpha \tilde z|^2} \psi(z, \alpha)
    \end{equation} 
    \item $V(\tau)$ is the translation operator defined by
    \begin{equation}\label{V}
        V(\tau)\psi (z,\alpha)=\psi (z-\tau\alpha\tilde{z},\alpha)\, ,
    \end{equation}
    which is bounded on $L^2(\mathbb{H}^d)$ since
    \begin{align*}
        &\int_{\bR^{2d+1}}|\psi(z-\tau\alpha\tilde{z},\alpha)|^2\mathrm{d}^{2d}z\mathrm{d}\alpha\\
        =&\int_{\bR^{2d+1}}\frac{|\psi(z,\alpha)|^2}{1+\alpha^2\tau^2}\mathrm{d}^{2d}z\mathrm{d}\alpha\\
        \leq &\int_{\bR^{2d+1}}|\psi(z,\alpha)|^2\mathrm{d}^{2d}z\mathrm{d}\alpha\,.
    \end{align*}
    \end{itemize}

Hence, for all $\tau\in \bR^+$, $S(\tau) $  can be extended to a unitary operator on $L^2(\mathbb{H}^d)$. Furthermore,  
 the collection $(S(\tau))_{\tau\geq 0}$ fulfills the assumptions of Chernoff theorem \ref{Chernoff-theorem}. 
 \begin{theorem}\label{theorem-Chernoff-approximations-schroedinger-0} Let $L:D(L)\subset L^2(\mathbb{H}^d)\to L^2(\mathbb{H}^d)$ be the horizontal Laplacian and let $\{U(\tau)\}_{\tau\in \bR}$ be the associated strongly continuous  one-parameter group generated by the self-adjoint operator $L/2$, denoted with $U(\tau)=e^{i\tau \frac{L}{2}}$. Let $S:\bR^+\to \mathfrak{B}(L^2(\mathbb{H}^d))$ be the map defined by Eq. \eqref{Eq: Chernoff approximation for Feynman integral - first definition}. Then $S$ is Chernoff equivalent to $(e^{i\tau \frac{L}{2}})_{\tau \geq 0}$    
 \end{theorem}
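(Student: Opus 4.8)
The plan is to verify the four hypotheses of Theorem~\ref{Chernoff-theorem} for the family $(S(\tau))_{\tau\geq 0}$, taking as generator the operator $i\tfrac{L}{2}$ of the unitary group $U(\tau)=e^{i\tau L/2}$ and as core the Schwartz space $\mathcal{D}=S(\mathbb{H}^d)$. Conditions \ref{Item: Chernoff conditions - normalization} and \ref{Item: Chernoff conditions - norm bound} are essentially already in place: setting $\tau=0$ in the representation \eqref{Representation S-tau-Schroedinger case} gives symbol $\equiv 1$, so $S(0)=I$; and the factorization \eqref{S-tau-representation}, $S(\tau)=U_2(\tau)V(\tau)U_1(\tau)$ with $U_1(\tau),U_2(\tau)$ unitary and $V(\tau)$ a contraction, yields $\|S(\tau)\|_{\mathfrak{B}(L^2(\mathbb{H}^d))}\leq 1$, so condition \ref{Item: Chernoff conditions - norm bound} holds with $\omega=0$.

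For the strong continuity (condition \ref{Item: Chernoff conditions - strong continuity}) I would again use the factorization \eqref{S-tau-representation} together with the fact that a product of uniformly norm-bounded, strongly continuous operator families is strongly continuous. Each factor is strongly continuous in $\tau$: $U_1(\tau)$, defined in \eqref{U1}, is the fiberwise free Schr\"odinger propagator with multiplier $e^{-i\tau|\eta|^2/2}$, hence a strongly continuous unitary group; $U_2(\tau)$, defined in \eqref{U2}, is multiplication by the phase $e^{-i\tau|\alpha\tilde z|^2/2}$ and is strongly continuous by dominated convergence; and $V(\tau)$, defined in \eqref{V}, is composition with the linear shear $z\mapsto z-\tau\alpha\tilde z=(I-\tau\alpha J)z$ in each $\alpha$-fiber (where $Jz:=\tilde z$), whose strong continuity follows from a change of variables and dominated convergence on the dense set $S(\mathbb{H}^d)$, extended to $L^2(\mathbb{H}^d)$ by the uniform contraction bound.

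The core of the argument is condition \ref{Item: Chernoff conditions - approximation on a core}. From \eqref{Representation S-tau-Schroedinger case},
\[
\frac{S(\tau)\psi-\psi}{\tau}(z,s)=\int_{\mathbb{R}^{2d+1}}e^{i(\eta\cdot z+\alpha s)}\,\frac{e^{-\frac{i}{2}\tau|\eta+\alpha\tilde z|^2}-1}{\tau}\,\hat\psi(\eta,\alpha)\,\mathrm{d}^{2d}\eta\,\mathrm{d}\alpha\,,
\]
so the pointwise symbol limit is $-\tfrac{i}{2}|\eta+\alpha\tilde z|^2$. Expanding $|\eta+\alpha\tilde z|^2=|\eta|^2+2\alpha\,\eta\cdot\tilde z+\alpha^2\|z\|^2$ and matching Fourier multipliers against \eqref{formalap}, namely $|\eta|^2\leftrightarrow-\Delta$, $\alpha^2\|z\|^2\leftrightarrow-\|z\|^2\partial_{ss}$ and $2\alpha\,\eta\cdot\tilde z\leftrightarrow-2\sum_{i=1}^d(z_{i+d}\partial_{z_is}-z_i\partial_{z_{i+d}s})$, identifies the limit operator as $\tfrac{i}{2}L$, the generator of $U$. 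The same identification is obtained by differentiating the factorization at $\tau=0$: $\dot U_1(0)=\tfrac{i}{2}\Delta$, $\dot U_2(0)=\tfrac{i}{2}\|z\|^2\partial_{ss}$ and $\dot V(0)=i\sum_{i=1}^d(z_{i+d}\partial_{z_is}-z_i\partial_{z_{i+d}s})$, whose sum is $\tfrac{i}{2}L$. To upgrade the pointwise limit to an $L^2$ limit I would use the Taylor estimate $|e^{-\frac{i}{2}\tau a}-1+\tfrac{i}{2}\tau a|\leq\tfrac{1}{8}\tau^2a^2$ with $a=|\eta+\alpha\tilde z|^2$, reducing matters to showing that the remainder operator, whose symbol is dominated by $\tfrac{\tau}{8}|\eta+\alpha\tilde z|^4$, has $L^2(\mathbb{H}^d)$-norm $O(\tau)$ on $S(\mathbb{H}^d)$.

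The main obstacle is exactly this remainder estimate. Because the symbol $|\eta+\alpha\tilde z|^2$ grows polynomially in the spatial variable $z$ through the term $\alpha^2\|z\|^2$, the operator $S(\tau)$ is a genuine pseudodifferential operator rather than a Fourier multiplier, and Plancherel's identity cannot be applied directly to bound the remainder. The decisive step is to expand $|\eta+\alpha\tilde z|^4$ into finitely many monomials $z^\beta\eta^\gamma\alpha^\delta$, to recognize the contribution of each as a polynomial $z^\beta$ times a constant-coefficient derivative $\partial^{(\gamma,\delta)}\psi$ (possibly dressed by the bounded oscillatory factor coming from the integral form of the Taylor remainder), and to bound each $L^2(\mathbb{H}^d)$-norm uniformly for $\tau\in(0,1)$ using that $S(\mathbb{H}^d)$ is stable under multiplication by polynomials and under differentiation and consists of rapidly decaying functions. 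This yields $\|\tau^{-1}(S(\tau)-I-\tau\,\tfrac{i}{2}L)\psi\|_{L^2(\mathbb{H}^d)}=O(\tau)$ on the core, establishing condition \ref{Item: Chernoff conditions - approximation on a core} and hence, by Theorem~\ref{Chernoff-theorem}, the Chernoff equivalence \eqref{Chest} of $S$ with $(e^{i\tau L/2})_{\tau\geq0}$.
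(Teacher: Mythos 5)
Your proposal is correct and takes essentially the same route as the paper: conditions (i)--(iii) via the representation \eqref{Representation S-tau-Schroedinger case} and the factorization \eqref{S-tau-representation}, and condition (iv) via a second-order Taylor expansion of the phase with integral remainder, expanding $|\eta+\alpha\tilde z|^4$ into monomials and bounding each term uniformly on $S(\mathbb{H}^d)$ through the composition of the multiplication, derivative and propagator operators, exactly as in the paper's Eq.~\eqref{rap-pr4sch}. You are in fact slightly more careful than the paper on one point: since the shear $z\mapsto z-\tau\alpha\tilde z$ has Jacobian $(1+\tau^2\alpha^2)^d$, the operator $V(\tau)$ of \eqref{V} is a contraction rather than unitary, so $S(\tau)$ is correctly a contraction, which is all that condition \ref{Item: Chernoff conditions - norm bound} requires.
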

 \begin{proof}
     The four assumptions of Theorem \ref{Chernoff-theorem} are satisfied, indeed:
 \end{proof}
\begin{enumerate}
    \item[i.] $S(0)=I$, as one can easily check by direct inspection.
    \item[ii.] $\|S(\tau)\|_{\mathfrak{B}(L^2(\mathbb{H}^d))}\leq 1$. This follows directly by the representation formula \eqref{S-tau-representation}, giving $S(\tau)$ as the composition of three unitary operators. 
    \item[iii.] $S(\tau)$ is continuous in the strong operator topology. Indeed, for $\psi\in S(\mathbb{H}^d)$, it is easy to check that $S(\tau)\psi\to S(\tau_0)\psi $ for $\tau \to \tau_0$ as an application of dominated convergence theorem.
    Since by point 2. the family of operators $(S(\tau))_{\tau\geq 0}$ is uniformly bounded and it converges on a dense set, it converges on $L^2(\mathbb{H}^d)$.
    \item[iv.] By considering again a vector $\psi\in S(\mathbb{H}^d)$ and using representations \eqref{Representation S-tau-Schroedinger case} and \eqref{Representation S-tau-Schroedinger case-2}, we get 
    \begin{align*}
      S(\tau)\psi (z,\alpha) &=  \psi (z,\alpha)-\tau \frac{i}{2}\int_{\bR^{2d}}|\eta +\alpha\tilde z|^2e^{i\eta z}\hat \psi (\eta ,\alpha)\mathrm{d}^{2d}\eta+\tau^2 R(\tau, z,\alpha)\\
      & = \psi (z,\alpha)+\tau \frac{i}{2}L\psi (z,\alpha)+\tau^2  R_\tau( z,\alpha)
    \end{align*}
    where the remainder $  R_\tau( z,\alpha)$ is given by 
    \begin{equation*}
         R_\tau( z,\alpha)=-\frac{1}{4}\int_0^1\int_{\bR^{2d}}(1-u)|\eta{+}\alpha \tilde z|^4e^{i\eta z}e^{-\frac{i}{2}|\eta{+}\alpha \tilde z|^4\tau u}\hat \psi (\eta,\alpha)\mathrm{d}^{2d}\eta \mathrm{d}u\,,
    \end{equation*}
    and we just have to show that, for $\tau$ belonging in a neighborhood of 0, the $L^2$-norm of the map $(z,\alpha)\mapsto   R_\tau( z,\alpha)$ is bounded. By construction, the function $R_\tau( z,\alpha)$ is given by 
    $$R_\tau( z,\alpha)=-\frac{1}{4}\int_0^1\phi_{\tau u}(z,\alpha)(1-u)\mathrm{d}u\,,$$
    where 
    \begin{equation}\label{phiutau}
        \phi_{\tau u}(z,\alpha)=\int_{\bR^{2d}}P(\alpha,\eta,z) e^{i\eta z}e^{-\frac{i}{2}|\eta{+}\alpha \tilde z|^4\tau u}\hat \psi (\eta,\alpha)\mathrm{d}^{2d}\eta\mathrm{d}u\,,
    \end{equation}
    and $P$ is a polynomial function in the variables $\alpha,k,\eta$. In fact the function \eqref{phiutau} is a finite linear combination of terms of the form
    \begin{align}\nonumber
      \phi_{n_\alpha,{\mathbf n},\mathbf{m} }(z,\alpha)& = \alpha^{n_\alpha}\prod_iz_i^{n_i} \int_{\bR^{2d}}\prod_j\eta_j^{m_j} e^{i\eta z}e^{-\frac{i}{2}|\eta{+}\alpha \tilde z|^4\tau u}\hat \psi (\eta,\alpha)\mathrm{d}^{2d}\eta\mathrm{d}u\\
       & =M_{n_\alpha, ,{\mathbf n}}U_2(u\tau)V(u\tau)U_1(u\tau)N_{\mathbf{m}}\psi (\eta,\alpha)\label{rap-pr4sch}
    \end{align}
    where $U_1(u\tau)$, $U_2(u\tau)$ and $V(u\tau) $ are the bounded operators defined by \eqref{U1}, \eqref{U2} and \eqref{V} respectively, while
    \begin{eqnarray*}
         M_{n_\alpha, ,{\mathbf n}}\psi(z,\alpha)&:=&\alpha^{n_\alpha}\prod_iz_i^{n_i}\psi(z,\alpha)\\
    \widehat{N_{\mathbf{m}}\psi}(\eta,\alpha)&:=&\prod_j\eta_j^{m_j}\hat \psi(\eta,\alpha)
    \end{eqnarray*}
       One can easily verify that all the operators appearing on the right hand side of \eqref{rap-pr4sch} map $S(\mathbb{H}^d)$ into itself, hence  the vector $ \phi_{\tau u}$ defined by \eqref{phiutau} belongs to $S(\mathbb{H}^d)$. Moreover, by the boundedness and strong continuity of the operators $U_1(u\tau)$, $U_2(u\tau)$ and $V(u\tau) $ as a function of $\tau$, the map $u\mapsto \|\phi_{\tau u}\|$ is continuous and bounded. Hence, by Jensen's inequality we have:
       \begin{align*}
           \|R_\tau\|^2&=\frac{1}{16}\int_{\bR^{2d}}|\int_0^1\phi_{\tau u}(z,\alpha)(1-u)du|^2dzd\alpha\\
           &\leq \frac{1}{16}\int_{\bR^{2d}}\frac{1}{2}\int_0^1|\phi_{\tau u}(z,\alpha)|^2(1-u)dudzd\alpha\\
           &=\frac{1}{32}\int_0^1\|\phi_{\tau u}\|^2(1-u)du<+\infty\,.
       \end{align*}
      
    \end{enumerate}

    \begin{remark}
        By applying a simple change of variable argument to the oscillatory integral \eqref{Eq: Chernoff approximation for Feynman integral - first definition}, the action of the operator $S(\tau)$ on a vector $\psi\in S(\mathbb{H}^d)$ can be equivalently represented as:
        \begin{align}\label{Eq: Chernoff approximation for  Feynman integral-2}
	[S(\tau)\psi](z,s)
	=\frac{1}{(2\pi i\tau^{-1})^d}\int^o_{\mathbb{R}^{2d}}e^{i\tau|\zeta|^2/2}\psi(z+\tau\zeta,s+\tau z\cdot_\sigma\zeta)\mathrm{d}^{2d}\zeta\,,
\end{align}
    \end{remark}
    \begin{remark}
        A direct consequence of theorem \ref{theorem-Chernoff-approximations-schroedinger-0} is the following approximation formula for the solution of Eq. \eqref{Schrodinger} for $v\equiv 0$:
        \begin{equation}\label{chernoff-schroedinger-0}
    \lim_{n\to\infty}\sup_{\tau\in[0,T]}\left\|S(\tau/n)^n\psi - e^{-iH\tau}\psi\right\|=0
\end{equation}
        \end{remark}
    \subsection{The Feynman-Kac formula }\label{subsectionFeynman-2}
Let us consider now the Schr\"odinger equation \eqref{Schrodinger}
associated to the Hamiltonian operator $H$ given by \eqref{H}. Consider the family of operators $\{S_v(\tau)\}_{\tau\geq 0}$ defined on $L^2(\bH^{ d})$    by
\begin{equation}\label{S-V}
    S_v(\tau)=S(\tau)e^{-iv\tau}
\end{equation}
where, with some abuse of notation, $v:L^2(\bH^{ d})\to L^2(\bH^{ d})$ denotes the multiplication operator associated to the potential $v\in L^\infty(\bH^d)$.

\begin{theorem}\label{theorem-Chernoff-approximations-V} Let $v\in L^\infty(\bH^d)$. Then the family of maps $S_v(\tau))_{\tau\geq 0}$ defined by \eqref{S-V} is Chernoff equivalent to the $C_0$-unitary group $U(\tau)=e^{-iH\tau}$. In particular for all $\psi \in L^2(\bH^d)$ and $T>0$
\begin{equation*}
    \lim_{n\to\infty}\sup_{\tau\in[0,T]}\left\|S_v(\tau/n)^n\psi - e^{-iH\tau}\psi\right\|=0
\end{equation*}
and for $\psi_0\in D(H)$ the map $\tau\mapsto \psi (\tau) =e^{-iH\tau}\psi_0$ is a classical solution of Schr\"odinger equation \eqref{Schrodinger}.
\end{theorem}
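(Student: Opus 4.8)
The plan is to verify that the family $(S_v(\tau))_{\tau\geq 0}$ satisfies the four hypotheses of Chernoff's theorem (Theorem \ref{Chernoff-theorem}) relative to the generator of $U(\tau)=e^{-iH\tau}$, which is $-iH = i\tfrac{L}{2}-iv$, and then to read off the conclusion. I would exploit that $S(\tau)$ is already known, by Theorem \ref{theorem-Chernoff-approximations-schroedinger-0}, to be a Chernoff function for $e^{i\tau L/2}$, and that $e^{-iv\tau}$ is the exact, norm-continuous unitary group generated by the bounded multiplication operator $-iv$. The strategy is thus to treat $S_v=S(\tau)\,e^{-iv\tau}$ as the composition of a Chernoff function with the exact group of a bounded perturbation, and to show that the perturbation affects only the first-order term controlling condition (iv).

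Conditions (i)--(iii) are essentially immediate. First, $S_v(0)=S(0)e^{-iv\cdot 0}=I$. Next, since $v$ is real-valued (as required for self-adjointness of $H$), the operator $e^{-iv\tau}$ is multiplication by a unimodular function and hence unitary; combined with the unitarity of $S(\tau)$ established via the factorization \eqref{S-tau-representation}, this gives $\|S_v(\tau)\|_{\mathfrak{B}(L^2)}=1$, so the norm bound holds with $\omega=0$. Strong continuity of $\tau\mapsto S_v(\tau)$ follows by combining the strong continuity of $S(\tau)$ from Theorem \ref{theorem-Chernoff-approximations-schroedinger-0} with that of $\tau\mapsto e^{-iv\tau}$ (the latter by dominated convergence, using $|e^{-iv\tau}-e^{-iv\tau_0}|\leq\|v\|_\infty|\tau-\tau_0|$), together with the uniform bound $\|S_v(\tau)\|\leq 1$.

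The substantive point is condition (iv), which I would verify on the core $\mathcal{D}=S(\bH^d)$. For $\psi\in S(\bH^d)$ I would write $e^{-iv\tau}\psi=\psi-i\tau v\psi+\rho_\tau$ with $\|\rho_\tau\|_{L^2}\leq\tfrac{1}{2}\|v\|_\infty^2\tau^2\|\psi\|_{L^2}$, using the elementary estimate $|e^{-ix}-1+ix|\leq x^2/2$. Applying $S(\tau)$ and splitting,
\[
S_v(\tau)\psi = S(\tau)\psi \;-\; i\tau\, S(\tau)(v\psi) \;+\; S(\tau)\rho_\tau .
\]
For the first term I invoke condition (iv) of Theorem \ref{theorem-Chernoff-approximations-schroedinger-0}, namely $S(\tau)\psi=\psi+\tau\, i\tfrac{L}{2}\psi+o(\tau)$. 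The last term is controlled by $\|S(\tau)\rho_\tau\|\leq\|\rho_\tau\|=O(\tau^2)=o(\tau)$ since $\|S(\tau)\|\leq 1$. For the middle term I use only strong continuity of $S$ at $0$, so that $S(\tau)(v\psi)=v\psi+o(1)$ and hence $-i\tau S(\tau)(v\psi)=-i\tau v\psi+o(\tau)$. Collecting the three contributions yields
\[
S_v(\tau)\psi = \psi + \tau\Big(i\tfrac{L}{2}\psi - i v\psi\Big) + o(\tau) = \psi + \tau(-iH)\psi + o(\tau),
\]
which is exactly condition (iv). With all four hypotheses verified, Theorem \ref{Chernoff-theorem} gives the stated convergence of $S_v(\tau/n)^n$ to $e^{-iH\tau}$ uniformly on $[0,T]$, and Remark \ref{remark Classical Solution} then identifies $\tau\mapsto e^{-iH\tau}\psi_0$, for $\psi_0\in D(H)$, as the classical solution of \eqref{Schrodinger}.

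I expect the handling of the non-smooth potential in condition (iv) to be the main obstacle. Since $v$ is merely $L^\infty$, the product $v\psi$ need not lie in $S(\bH^d)$, so the Taylor-type expansion of $S$ is unavailable for that term; the clean separation into one term treated by the Schwartz-function expansion of $S$ and one treated by mere strong continuity is precisely what makes the argument close. This separation crucially exploits the boundedness of $v$, which guarantees both that $v\psi\in L^2(\bH^d)$ and that the remainder $\rho_\tau$ is genuinely $O(\tau^2)$ in the $L^2$ norm.
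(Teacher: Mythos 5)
Your proposal is correct and follows essentially the same route as the paper: conditions (i)--(iii) of Chernoff's theorem come for free from the unitarity and strong continuity of the two factors, and condition (iv) is verified on the core $S(\bH^d)$ by splitting $S_v(\tau)\psi$ into the Chernoff remainder of $S$, the Taylor remainder of $e^{-iv\tau}$, and a cross term of order $o(\tau)$. The only cosmetic difference is that your splitting respects the operator order $S(\tau)e^{-iv\tau}$ of \eqref{S-V} and controls the cross term $-i\tau\bigl(S(\tau)-I\bigr)(v\psi)$ by strong continuity of $S$, whereas the paper's displayed decomposition corresponds to the reversed order $e^{-i\tau v}S(\tau)$ and controls the cross term $i\bigl(e^{-i\tau v}-1\bigr)\tfrac{L}{2}\psi$ by strong continuity of the multiplication group --- a variant the paper's subsequent remark notes is equivalent.
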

\begin{proof}
    By construction, for all $\tau\geq 0$ the operator $S_v(\tau)$ is the composition of $S(\tau)$ and the unitary multiplication operator $M(\tau)=e^{-iv\tau}$.
    Since the family of operators $(M(\tau))_{\tau\in \bR}$ is a $C_0$-group of unitary operators, it automatically fulfills properties i., ii. and iii. of theorem \ref{Chernoff-theorem}. Concerning property iv., by taking a vector $\psi \in S(\mathbb{H}^d)$ we have:
    \begin{align*}
      \frac{S_v(\tau)\psi-\psi+i\tau H\psi}{\tau}&=\frac{S_v(\tau)\psi-\psi-i\tau \frac{L}{2}\psi+i\tau v\psi}{\tau}\\
      &\leq e^{-i\tau v}\left(\frac{S(\tau)\psi-\psi-i\tau\frac{ L}{2}\psi}{\tau}\right)+\frac{e^{-i\tau v}\psi-\psi+i\tau  v\psi}{\tau}+i(e^{-i\tau v}-1)\frac{L}{2}\psi\,.
    \end{align*}
    The first term converges  to 0 since the operator $M(\tau)=e^{-i\tau v}$ is bounded and $S(\tau)$ satisfies condition \ref{Item: Chernoff conditions - approximation on a core}  of Theorem \ref{Chernoff-theorem} (see the proof of Theorem \ref{theorem-Chernoff-approximations-schroedinger-0}), the second term converges to 0 since $S(\mathbb{H}^d)\subset D(v)$, while the last term converges to 0 by the strong continuity of the unitary group $(M(\tau))_{\tau\in \bR}$ generated by the multiplication operator $v$.
\end{proof}
\begin{remark}
    The same conclusion of Theorem \ref{theorem-Chernoff-approximations-V} holds by switching the order of operators $S(\tau)$ and $T(\tau)$, obtaining:
    \begin{equation}\label{chernoff-schroedinger-V}
    \lim_{n\to\infty}\sup_{\tau\in[0,T]}\left\|(M(\tau/n)S(\tau/n))^n\psi - e^{-iH\tau}\psi\right\|=0
\end{equation}
\end{remark}
    \subsection{Interpretation in terms of Feynman path integrals}
Formulae \eqref{chernoff-schroedinger-0} and \eqref{chernoff-schroedinger-V} have a direct interpretation in terms of the so-called Feynman path integral representation, a heuristic construction procedure for the solution of the Schr\"odinger equation which is extensively applied in the physics literature. Feynman formulae were introduced in the early 40's in R. Feynman's PhD thesis\cite{FeyThesis,Fey}, where he proposed a suggestive representation for the solution of the Schr\"odinger equation describing the time evolution of the state $\psi $ of a non relativistic quantum particle, the so-called wave function:
\begin{equation}\label{Schroedinger traditional}
    \begin{cases}
        i\frac{\partial}{\partial t}\psi(t,x)=-\frac{1}{2}\Delta\psi(t,x) +v(x)\psi(t,x),\qquad x\in \bR^d,\,  t\in \bR\,.\\
         \psi(0,x)=\psi_0(x)
    \end{cases}
\end{equation}
According to Feynman's proposal, the solution $\psi $ should be given by an integral formula of the following form

    \begin{equation}\label{Fey1}
\psi (t,x)=C^{-1}\int_\Gamma e^{iS(\gamma)}\psi_0(0, \gamma (t))d\gamma\,,
\end{equation}
where $\Gamma$ denotes the set of continuous paths $\gamma :[0,t]\to \bR^d$ with fixed endpoint $\gamma(0)=x$, the symbol $d\gamma$ stands for a Lebesgue-type measure on $\Gamma $, $C$ is a normalization constant while the integrand 
$$
S(\gamma)=\int{\mathcal L}(\gamma(\tau),\dot\gamma(\tau) )d\tau=\int_0^t\left( \frac{|\dot\gamma(\tau)|^2}{2}-v(\gamma(\tau))\right)d\tau,$$ is the classical action functional  evaluated along the path $\gamma$, where $\mathcal L$ denotes the Lagrangian.

Clearly, formula \eqref{Fey1} lacks of mathematical rigour. Indeed, neither the Lebesgue measure on the infinite-dimensional space $\Gamma$ nor the normalization constant $C$ are well defined and several efforts have been devoted to the rigorous mathematical definition of formula \eqref{Fey1} (see, e.g. \cite{JoLa,Mabook,Fu,NiTrabook} and references therein).  The minimal interpretation of representation \eqref{Fey1} is in terms of the limit of a particular approximation procedure for the solution of \eqref{Schroedinger traditional}.
Indeed, by dividing the time interval $[0,t]$ into $n$ equal parts of amplitude $t/n$ and  considering the finite-dimensional space $\Gamma_n$ of piecewise linear paths $\gamma_n$, i.e. those attaining constant velocities on the partition subintervals 
\begin{equation}\label{Piecewise linear paths}
	\gamma_n(\tau):=x_j+\frac{(x_{j+1}-x_j)}{t/n}(\tau -jt/n), \ \ \tau\in \left [\frac{jt}{n},\frac{(j+1)t}{n} \right],
\end{equation}
where $x_j:=\gamma (jt/n)$ and $j=0, \dots, n-1$, then formula \eqref{Fey1} reduces to a finite dimensional oscillatory integral over the set of positions $x_j$ attained by the paths $\gamma_n $ at the endpoints $t_j$ of the partition subintervals
\begin{equation}
\label{trotter2}
(2 \pi i  t/n)^{-nd/2}\int^o_{\bR^{nd}} \psi_0(x_n) e^{i\sum_{j=0}^{n-1}\frac{|x_{j+1}-x_{j}|^2}{2t/n}-i \sum_{j=1}^nv(x_j)\frac{t}{ n}}dx_1\dots dx_{n}.
\end{equation}
According to Feynman's intuition, the sequence \eqref{trotter2} should converge for $n\to \infty$ to the solution of Eq. \eqref{Schroedinger traditional}, and this Ansatz has been proved in various topologies and under different assumptions on the potential $v$ (see, e.g., \cite{Nel, Nic16,JoLa,Tru1,Ichi0}). In the case where the Euclidean configuration space $\bR^d$ is replaced by a general Riemannian manifold, the set $\Gamma_n$ of piecewise linear paths, on which the approximation procedure \eqref{trotter2} relies, is replaced by a set of piecewise geodesic paths. Contrary to on the Euclidean case, where a rich mathematical literature is present, for Riemannian manifolds only a few rigorous results have been obtained, mostly restricted to rather peculiar compact manifolds with additional algebraic structure such as compact Lie groups or simmetric spaces \cite{Tha1,Tha2,DrMaMo23}, see also the   recent work \cite{Fuk} where approximations via piecewice classical paths are considered.
In this section we show how the Chernoff approximations developed in Sections \ref{subsection-Fey1} and \ref{subsectionFeynman-2} provide a rigorous mathematical proof of Feynman construction procedure via piecewise geodesic paths on the Weyl-Heisenberg group. In the following we shall restrict ourselves to the case $v\in C^\infty_b(\bH^d)$.

First of all, let us remark that by applying an elementary change of variable in the regularized integrals appearing on the right-hand-side of \eqref{limit definition oscillatory integral 1}, for any $\tau>0$ the action of the operator $S(\tau)$ on $S(\mathbb{H}^d)$ can be equivalently written as
\begin{align}\label{Eq: Chernoff approximation for  Feynman integral}
	[S(\tau)\psi](z,s)
	=\frac{1}{(2\pi i\tau^{-1})^d}\int^o_{\mathbb{R}^{2d}}e^{i\tau|\xi|^2/2}\psi(z+\tau \xi ,s+\tau z\cdot_\sigma \xi)\mathrm{d}^{2d}\xi\,,
\end{align}
where the new variable $\xi\equiv (\xi^{x,1},\ldots,\xi^{x,d}, \xi^{y,1},\ldots,\xi^{y,d})\in \bR^{2d}$ provides the components, under the basis $\{X_1,\ldots, X_d, Y_1,\ldots, Y_d\}$ of the horizontal subspace of the Lie algebra, of the velocity vector of the paths $\tau\mapsto (z+\tau \xi ,s+\tau z\cdot_\sigma \xi) $. 
By introducing this notation and relying on the fact that $S(\tau)$ maps $S(\mathbb{H}^d)$ into itself  and Eq. \eqref{chernoff-schroedinger-0}, for all $t>0$ it it possible to approximate the solution $\psi (t)$ of the Schr\"odinger equation \eqref{Schrodinger} with $v=0$ and $\psi_0\in S(\mathbb{H}^d)$ as the limit in $L^2(\mathbb{H}^d)$ of the sequence of vectors $\psi_n(t)\in S(\mathbb{H}^d)$ given by
\begin{equation}\label{fin dim schrodinger V0}
    \psi_n(t)( z,s)=(2\pi i(t/n)^{-1})^{-nd}\int^o_{\mathbb{R}^{2d}}\dots\int^o_{\mathbb{R}^{2d}}\psi_0(\gamma_{\xi_1,\dots,\xi_n}(t))e^{\frac{i}{2}\sum_{j=1}^n|\xi|^2\frac{t}{n}}\mathrm{d}^{2d}\xi_1\dots \mathrm{d}^{2d}\xi_n
\end{equation}
where $\gamma_{\xi_1,\dots,\xi_n}:[0,t]\to \bH^d$ is the piecewise-horizontal curve with components $(z(r),s(r))$ defined by
\begin{equation}\label{piecewise geodesic WH}
\begin{cases}
    \gamma_{\xi_1,\dots,\xi_n}(0)=(z,s)\\
    \gamma_{\xi_1,\dots,\xi_n}(r)=(z(t_j)+(r-t_j)\xi_{j+1},s(t_j)+(r-t_j)z(t_j)\cdot_\sigma v_{j+1}), \quad r\in \left[t_j,t_{j+1}\right]
\end{cases}
\end{equation}
where $t_j:=jt/n$, $j=0,\dots, n$.
In particular, $\gamma_{\xi_1,\dots,\xi_n}$ is a piecewise-geodesic path, since for each subinterval $\left[jt/n,(j+1)t/n\right]$ the curve $\gamma_{\xi_1,\dots,\xi_n}$ is a geodesic with velocity $\sum_{k=1}^d\left(\xi_{j+1}^{x,k}X_k+\xi_{j+1}^{y,k}Y_k\right)$.

More generally, in the case where  $v\in C^\infty_b(\mathbb{H}^d)$ we have that the unitary group $M(\tau)=e^{-i\tau v}$, $\tau\in \bR$ maps $S(\mathbb{H}^d)$ into itself and formula  \eqref{chernoff-schroedinger-V} for $\psi_0\in S(\mathbb{H}^d)$ can be interpreted as the convergence in $L^2(\mathbb{H}^d)$ of the sequence of vectors $\psi_n(t)\in S(\mathbb{H}^d)$ given by:
\begin{multline}\label{Feynman-approximations}
    \psi_n(t)( z,s)=(2\pi i(t/n)^{-1})^{-nd}\int^o_{\mathbb{R}^{2d}}\dots\int^o_{\mathbb{R}^{2d}}\psi_0(\gamma_{\xi_1,\dots,\xi_n}(t))e^{i \left(\frac{1}{2}\sum_{j=1}^n|\xi|^2\frac{t}{n}-\sum_{j=1}^nv(\gamma_{\xi_1,\dots,\xi_n}(jt/n))\frac{t}{n}\right)}\\ \mathrm{d}^{2d}\xi_1\dots \mathrm{d}^{2d}\xi_n
\end{multline}
to the solution $\psi(t)\equiv e^{-it H}\psi_0$ of the Schr\"odinger equation \eqref{Schrodinger}. The integral is meant as an iterative oscillatory integral in the sense of Eq. \eqref{limit definition oscillatory integral 1}. The exponent $$\left(\frac{1}{2}\sum_{j=1}^n|\xi|^2\frac{t}{n}-\sum_{j=1}^nv(\gamma_{\xi_1,\dots,\xi_n}(jt/n))\frac{t}{n}\right) $$ appearing on the right-hand-side of formula \eqref{Feynman-approximations} can be interpreted as the classical action functional evaluated along the path $\gamma_{\xi_1,\dots,\xi_n}$. In particular, the term $\frac{1}{2}\sum_{j=1}^n|\xi|^2\frac{t}{n}$ is associated to the  kinetic part of the classical action $\frac{1}{2}\int_0^\tau\|\dot \gamma_{\xi_1,\dots,\xi_n} (r)\|^2dr$ by computing the norm of the vector $\dot \gamma_{\xi_1,\dots,\xi_n} $ with respect to an  horizontal metric on $\bH^d$ making the vectors $X_j$, $Y_j$ in \eqref{basis vectors} orthonormal, while the term $\sum_{j=1}^nv(\gamma_{\xi_1,\dots,\xi_n}(jt/n))\frac{t}{n}$ provides the Riemann sum approximation of  the integral $\int_0^t v(\gamma_{\xi_1,\dots,\xi_n}(r))dr$. 

This analysis shows how, in a sub-Riemannian manifold, the paths $\gamma\in \Gamma$ on which the heuristic integration in  \eqref{Fey1} is performed are to be chosen among the horizontal ones.
\section{Relation with the magnetic Laplacian}

For any vector $\psi\in L^2(\bH^d)$, let us consider its representation in terms of the partial Fourier transform $\hat \psi$ defined as
\begin{equation}\label{patial fourier transform}
    \tilde\psi (z, \alpha):=\frac{1}{2\pi}\int_\bR e^{-i\alpha s}\psi (z, s) \dr s\,.
\end{equation}

In this representation, the horizontal vectors \eqref{basis vectors} of the Lie algebra assume the following form:
\begin{equation*}
     X_j=\partial_{x^j}+i\alpha y^j\,,
    \qquad
    Y_j=\partial_{y^j}-i\alpha x^j\,,
\end{equation*}
and the horizontal Laplacian $L$ on smooth compactly supported functions $\tilde \psi \in C^\infty_c(\mathbb{H}^d)$ is given by:
\begin{align*}
    L\tilde \psi (z, \alpha)&=\sum_{j=1}^d(X_i^2+Y_i^2)\tilde \psi (z, \alpha)\\
    &=-\sum_{j=1}^d\left( (-i\partial_{x^j}+\alpha y^j )^2+(-i\partial_{y^j}-\alpha x^j)^2\right)\tilde \psi (z, \alpha)\,.
\end{align*}

The sum above can be written as $(-i\nabla -A)^2$, where $A$ denotes the vector field on $\bR^{2}$ defined as 
\begin{equation}\label{vectorA}
A(x^j,y^j)=(-\alpha y^j, \alpha x^j) \,,    
\end{equation}and represents the so-called magnetic Laplacian associated to a constant magnetic field $B=\nabla \times A=2\alpha$. 

In this representation, the heat equation \eqref{CauchyProblemHeat} and the Schr\"odinger equation \eqref{Schrodinger}, with $d=1$ and respectively $c=0$ and $v=0$, assume the following form:
\begin{equation}\label{heat-magnetic}
     \partial_t \psi_\alpha (t, x,y)=-\frac{1}{2}(-i\nabla -A)^2 \psi_\alpha(t,x,y)
 \end{equation}
 \begin{equation}\label{schrodinger-magnetic}
     i\partial_t \psi_\alpha (t, x,y)=\frac{1}{2}(-i\nabla -A)^2\psi_\alpha(t,x,y)
 \end{equation}
 where  $\psi_\alpha (t,x,y)\equiv \tilde \psi (t, x,y, \alpha)$. In particular, Eq. \eqref{schrodinger-magnetic}  describes the time evolution of the state of  a charged quantum particle moving in the 2-dimensional Euclidean space under the action of a constant magnetic field.

This is an exactly solvable model, the solution being given by $$\psi_\alpha (t, x,y)=\int_{\bR^2}K_\alpha(x,y, x',y')\psi_\alpha (0, x',y')dx'dy'$$ with:
 \begin{equation*}
     K_\alpha(x,y, x',y')=\frac{\alpha}{2\pi \sinh (\alpha t)}\frac{1}{\sqrt{2\pi t}}e^{-\frac{\alpha }{2}\coth (\alpha t)\left((x-x')^2+((y-y')^2\right)-i\alpha (xy'-x'y)}
 \end{equation*}
 in the case of the heat equation \eqref{heat-magnetic} and
\begin{equation*}
     K_\alpha (x,y, x',y')=\frac{\alpha}{2\pi \sin (\alpha t)}\frac{1}{\sqrt{2\pi i t}}e^{-\frac{\alpha }{2}\cot (\alpha t)\left((x-x')^2+(y-y'^2\right)-i\alpha (xy'-x'y)}
 \end{equation*}
  in the case of the Schr\"odinger equation \eqref{schrodinger-magnetic} (see, e.g. \cite{Simon} and references therein).
 Moreover, the functional integral representation for the solution of Eq. \eqref{heat-magnetic} and Eq. \eqref{schrodinger-magnetic}, i.e. the Feynman-Kac-Ito formula \cite{Simon} and the Feynman path integral \cite{AlBrMF,AlCaMa}, have been extensively studied.
 \subsection{The Feynman-Kac-Ito formula}
 The classical Feynman-Kac formula for the probabilistic representation of the solution of the heat equation on $\bR^d $ in terms of the expectation with respect to the distribution of Brownian  motion can be extended to the case of equation \eqref{heat-magnetic} and assumes the following form:
 \begin{equation}\label{FK-magnetic-1}
     \psi_\alpha (t,x,y)=\bE\left[e^{i\alpha\left(yB_1(t)-xB_2(t)+\int_0^t B_2(r)dB_1(r)-B_1(r)dB_2(r))\right)}\psi_\alpha (0,x+B_1(t),y+B_2(t)) \right]\,,
 \end{equation}
 Where $B_1,B_2$ are two independent standard Brownian motions and $dB$ denotes the Ito stochastic integral. The term ${\mathcal A}(t):=\int_0^t B_2(r)dB_1(r)-B_1(r)dB_2(r))$ represents the {\em Levy stochastic area}.
 The solution $\psi_t(z,s) $ can be recovered via inverse Fourier transform $$\psi_t(z,s)=\int_\bR e^{i\alpha s}\tilde \psi (z,\alpha)\dr \alpha$$
 yielding
 \begin{align}\label{Feynman-Kac-magnetic-heat}
     \psi_t(z,s)&=\int_\bR e^{i\alpha s}\bE\left[e^{i\alpha\left(yB_1(t)-xB_2(t)+\int_0^t B_2(r)dB_1(r)-B_1(r)dB_2(r))\right)}\hat\psi_0 (x+B_1(t),y+B_2(t),\alpha) \right]\dr\alpha\\
     &=\bE\left[\psi_0 \left(x+B_1(t),y+B_2(t),yB_1(t)-xB_2(t)+\int_0^t B_2(r)dB_1(r)-B_1(r)dB_2(r)\right) \right]
 \end{align}
 \begin{remark}
 The formula for general magnetic fields contains in the exponent an additional term of the form $\frac{1}{2}\int_0^t\Div A(x+B_1(r),y+ B_2(r))\dr r$, which in our case vanishes since the vector potential $A$ in \eqref{vectorA} is divergence-free. The correction term is exactly the difference between Ito stochastic integral $\int_0^tA(B(r))\dr B(r)$ and Stratonovich stochastic integral $\int_0^tA(B(r))\circ \dr B(r)$ and the general form of formula \eqref{FK-magnetic-1} is
 \begin{equation*}
     \psi_\alpha (t,x,y)=\bE\left[ e^{-i\int_0^tA(x+B_1(r),y+B_2(r))\circ \dr (B_1(r),B_2(r))}\psi_\alpha (0,x+B_1(t),y+B_2(t)) \right]\,.
 \end{equation*}

 \end{remark}
\subsection{Feynman path integral construction in terms of infinite dimensional oscillatory integrals}
In the realm of the different approaches to the mathematical definition of the heuristic Feynman formula \eqref{Fey1}, several efforts have been devoted to overcome the "minimal" one, relying essentially on the proof of the convergence of the sequence \eqref{trotter2} to the solution of the Schr\"odinger equation \eqref{Schroedinger traditional}. However, the presence of the imaginary unit $i$ in the heuristic Feynman formula \eqref{Fey1} and in its finite-dimensional approximations \eqref{trotter2} does not allow to construct the Feynman integral in terms of a well defined (in Lebesgue sense) integral over a space $\Gamma$ of continuous paths with respect to a complex $\sigma$-additive measure on it. In other words, for Schr\"odinger equation it is not possible to construct a measure on $\Gamma$ that plays the same role played by the Wiener measure for the heat equation (see \cite{Cam,Tho,Mabook} for a discussion of this no-go result). Due to these difficulties, it becomes necessary to adopt a different point of view to infinite dimensional integration and realize the "Feynman integral" as a linear continuous functional on a suitable class of integrable functions, in the spirit of Daniell's approach to integration theory. This line of thought lead to different approaches to Feynman integration, including e.g. infinite dimensional distribution theory and white noise analysis \cite{HKPS}, analytic continuation of Wiener integrals \cite{JoLa}, infinite dimensional oscillatory integrals.   In particular, the latter technique allowed to address non trivial quantum models \cite{AlGuMa,AlMa2} and to study, via an infinite dimensional version of the classical stationary phase method\cite{Hor,Dui}, the detailed semiclassical asymptotic behavior of the solution of the Schr\"odinger equation in the limit  where the (reduced) Planck $\hbar$ is regarded as a small parameter which is allowed to converge to 0. This approach is based on the generalization of the definition and the main properties of classical oscillatory integrals  \cite{Hor} to the case where the integration domain is a real separable infinite dimensional Hilbert space \cite{AlBr,ELT,Mabook} and will be adopted in the rest of this section.

Finite dimensional oscillatory  integrals are objects of the following form 
\begin{equation}\label{osc1}
\int_{\bR^n}e^{i\Phi(x)}f(x)\dr x,
\end{equation}
where  $\Phi:\bR^n\to \bR$ and $f:\bR^n\to \bC$ are Borel functions, $\Phi$ is usually called {\em phase function}. 
Particular example of integrals of this form are the so-called {\em Fresnel integrals}, where  $\Phi$ is a non-degenerate quadratic form.
They find applications in optics and in the theory of wave diffraction and, from a purely mathematical point of view, they have been extensively studied in connection with the theory of Fourier integral operators \cite{Hor, Dui}. Remarkably, even in the cases where the function $f$ is not summable, the oscillatory  integrals \eqref{osc1} can be defined and computed  as the limit of a sequence of regularized integrals. More specifically, as mentioned in Eq. \eqref{limit definition oscillatory integral 1}, if for any test function $\phi\in S(\bR^n)$ such that $\phi(0)=1$ and for all $\epsilon>0$ the integral $I_{\epsilon,\phi}\equiv\int_{\bR^n}\phi(\epsilon x )e^{i\Phi(x)}f(x)\dr x$ is well defined (in Lebesgue sense) and if it exists the limit $\lim_{\epsilon\downarrow 0}I_{\epsilon,\phi}\equiv I$ and it is independent of $\phi$, then it is called  {\em oscillatory integral of $f$ with respect to the phase $\Phi$ } and denoted $\int^o_{\bR^n}e^{i\Phi(x)}f(x)\dr x$. This definition allows to take advantage of the cancellations due to the oscillatory behaviour of the integrand and to prove, e.g., identities such as $\int^o_{\bR^n} e^{i\frac{\|x\|^2}{2}}\dr x=(2\pi i)^{n/2}$. In the case the phase $\Phi$ is a non-degenerate quadratic form, the oscillatory integral is called {\em Fresnel integral}.

In \cite{AlHK,ELT,AlBr} the definition of Fresnel integral has been extended to the case where the underlying integration space $\bR^n$ is replaced by an infinite dimensional real separable Hilbert space $\mathcal H$. The definition of an {\em infinite dimensional Fresnel integral}, formally denoted $ \widetilde{\int_\Hi } e
^{i\frac{\| x\|^2}{2} }f(x)\dr x$ relies on an approximation procedure via finite dimensional integrals which allows to overcome the non existence of the Lebesgue measure $\dr x$ on  infinite dimensional Hilbert spaces \cite{Yam,Mabook}.

\begin{definition}
\label{intoscinf1}
A function $f:\Hi\to\bC$ is  said to be {\em Fresnel integrable}
if  for any sequence $\{P _n\}_n$ of projectors
onto n-dimensional subspaces of $\Hi$, such that $P_n \to \mathbb{I}$ strongly as $ n \to \infty$ ( $\mathbb{I}$ being the identity operator in $\Hi$), the  oscillatory integrals
$$  
\int _{P _n\Hi}^oe ^{i\frac{\|P _n x\|^2}{2} }f(P _n x )d (P _nx ),
$$
are well defined  and the limit
\begin{equation}
\label{infdim}
\lim_{ n \to \infty} (2\pi i  )^{-n/2}  \int_{P _n\Hi}^oe ^{i\frac{\|P _n x\|^2}{2} }f(P _n x )d (P _nx )
\end{equation}
exists and is independent of the sequence $\{ P _n\}_n$. In this case the limit is  called {\em infinite dimensional oscillatory (Fresnel) integral} of
$f$  and is denoted by $$ \widetilde{\int_\Hi } e
^{i\frac{\| x\|^2}{2} }f(x)dx.$$
\end{definition}
\begin{remark}\label{remark independence of Pn}
    It is worthwhile to point out the importance of the requirement that the limit of the sequence of finite dimensional approximations \eqref{infdim} does not depend on the choice of the sequence of finite dimensional projectors $\{P_n\}_n$. This makes the Fresnel integral $\widetilde{\int_\Hi } e
^{i\frac{\| x\|^2}{2} }f(x)dx$ a well-defined object, whose value does not depend on the specific finite dimensional approximation implemented for its computation. 
\end{remark}

The description of the full class of Fresnel integral functions is still an open problem of harmonic analysis, even in finite dimension. However, it is possible to provide some explicit examples of subclasses $\cF\subset \bC^\Hi$ of Fresnel integrable function where explicit representation formulae for the Fresnel integral can be proved. In addition, under suitable topologies the infinite dimensional Fresnel integral turns out to be a linear continuous functional on $\cF$ \cite{AlHK,AlMa16,MaNiTr}.

Within this abstract framework, the heuristic Feynman path integral representation \eqref{Fey1} for the solution of the Schr\"odinger equation \eqref{Schroedinger traditional} can be rigorously mathematically realized as an infinite dimensional oscillatory integral on a suitable Hilbert space of continuous ``paths''. Let us consider the so-called {\em Cameron-Martin space} $\Hi_t$, that is the Hilbert space of absolutely continuous paths $\gamma :[0,t]\to\bR^d$ with fixed initial point $\gamma(0)=0$  and square integrable weak derivative $\int_0^t|\dot\gamma(r)|^2\dr r<\infty$, endowed with the  inner product
$$\langle \gamma _1,\gamma _2\rangle =\int ^t _0 \dot\gamma _1(r)\cdot\dot\gamma
  _2(r)\dr r .$$
  Under suitable assumptions on the potential $v$ and the initial datum $\psi_0$ it is possible to prove that  
   the function $f:\Hi\to \bC$ given by 
   $$f(\gamma)=e^{-i\int_0^t v(\gamma (r)+x)\dr r}\psi _0(\gamma (0)+x), \qquad x\in \bR^d, \gamma \in \Hi_t,$$
   is Fresnel integrable. Further, its infinite dimensional oscillatory integral
   $$\widetilde{\int_{\Hi_t}} e^{\frac{i}{2}\langle \gamma, \gamma \rangle}f(\gamma)d\gamma\equiv \widetilde{\int_{\Hi_t}} e^{\frac{i}{2}\int_0^t |\dot\gamma(r)|^2\dr r}e^{-i\int_0^t v(\gamma (r)+x)\dr r}\psi _0(\gamma (0)+x)d\gamma,$$
provides a representation for the solution  of \eqref{Schroedinger traditional} (see, e.g., \cite{AlHK,ELT,AlBr,Mabook}). The traditional "minimal" construction of the Feynman integral in terms of the limit of the approximations on piecewise-linear paths (see Eq. \eqref{trotter2}) can be recovered in the framework of infinite dimensional oscillatory integrals by considering a specific sequence of finite dimensional projection operators $\{P_n\}_n$.  Indeed, for any $n\in \bN$ consider the projector operator  $P_n:\Hi_t\to\Hi_t$ onto the subspace $\Gamma_n\subset \Hi_t$ of paths of the form \eqref{Piecewise linear paths}. In this case the finite dimensional approximations appearing in formula \eqref{infdim} resemble the integrals appearing in the traditional construction \eqref{trotter2}.

In \cite{AlCaMa} the Feynman path integral  for the Schr\"odinger equation associated to the magnetic Laplacian \eqref{schrodinger-magnetic}, i.e. the heuristic representation formula 
\begin{equation*}
       \psi (t,x)=\int _{\gamma(0)=x}e^{\frac{i}{2}\int_0^t|\dot\gamma(r)|^2\dr r{-}i\int_0^t  A(\gamma(r))\cdot \dot \gamma (r)\dr r }
      \psi_0(\gamma(t))d\gamma,
  \end{equation*}
has been studied within the framework of infinite dimensional oscillatory integrals. In particular, for a linear vector potential of the form \eqref{vectorA} two interesting results have been obtained. First, by taking an initial datum $\psi_0\in S(\mathbb{H}^d)$ such that its Fourier transform $\hat \psi_0$ is compactly supported and considering the sequence $\{P_n\}_n$ of projection operators onto the subspaces $\Gamma_n$ of piecewise linear paths, it is possible to show that the sequence of finite dimensional approximations of the infinite dimensional oscillatory integral over the Cameron Martin space
\begin{equation}
\label{piecewise linear magnetic}
       (2 \pi i   t/n)^{-nd/2}\int^o_{\bR^{nd}} \psi_0(x_n) e^{\frac{i}{2}\sum_{j=0}^{n-1}\frac{|x_{j+1}-x_{j}|^2}{t/n}{-}i\sum_{j=0}^{n-1}\int_{jt/n}^{(j+1)t/n}A\left(x_j+r\frac{(x_{j+1}-x_i)}{t/n}\right)\cdot (x_{j+1}-x_i)\dr r}dx_1\dots dx_{n}.,
  \end{equation}
with $x_0\equiv x$, converges to the solution $\psi (t, x)$ of Eq. \eqref{schrodinger-magnetic}. Actually, this result holds for a large class of vector potentials $A$, including also the linear ones. In particular for $d=1$ and $A$ of the form \eqref{vectorA}, by adopting the notation $z_j=(x_j,y_j)\in \bR^2$ $j=1,\ldots , n$ the action integrals appearing in the exponent in \eqref{piecewise linear magnetic} reduces to
\begin{equation*}
    \int_{jt/n}^{(j+1)t/n}A\left(z_j+r\frac{(z_{j+1}-z_i)}{t/n}\right)\cdot (z_{j+1}-z_i)\dr r=\alpha \left(x_j(y_{j+1}-y_j)-y_j(x_{j+1}-x_j)\right)\,.
\end{equation*}

Let us consider again the Schr\"odinger equation \eqref{Schrodinger} on $\bH$ with $v=0$ and let us assume that the initial datum $\psi_0\in S(\bR^{3})$ has a compactly supported Fourier transform $\hat \psi_0(\eta,\alpha)=\frac{1}{(2\pi)^{3}}\int_{\bR^{3}}e^{-i(\eta \cdot z+\alpha s)}\psi_0 (z,s)\mathrm{d}^{2d}z\mathrm{d}s$. In particular, this implies that for each $\alpha\in \bR$ the vector $\psi_\alpha(0)\in S(\bR^3)$ defined by $\psi_\alpha(0)(x,y)=\tilde \psi_0(x,y,\alpha) $ has a compactly supported Fourier transform and therefore the results of \cite{AlCaMa} can be applied. In particular,
 the finite dimensional approximations \eqref{piecewise linear magnetic} for the solution $\psi_\alpha(t, z)$ of \eqref{schrodinger-magnetic} are equal to
\begin{equation*}
       (2 \pi i t/n)^{-n}\int^o_{\bR^{2n}} \psi_0(z_n) e^{i\sum_{j=0}^{n-1}\frac{|z_{j+1}-z_{j}|^2}{2t/n}{-}i\alpha\sum_{j=0}^{n-1}\left(x_j(y_{j+1}-y_j)-y_j(x_{j+1}-x_j)\right)}dz_1\dots dz_{n}\,,
  \end{equation*}
with $z_0\equiv z$.  By introducing the variables $\xi_j:=\frac{z_j-z_{j-1}}{t/n}$, $j=1,\ldots,n$, resorting to the partial inverse Fourier transform of \eqref{patial fourier transform}
\[ \psi (z, s)=\int_\bR e^{i\alpha s}\hat\psi (z, \alpha) \dr s\,\]
and exploiting the continuity of the Fourier transform in the $L^2$-topology, we get the following sequence of approximations for the solution of Eq. \eqref{Schrodinger} on $\bH^d$ in the case $d=1$ and $v=0$:
    \begin{align*}
        \psi_n(t)(z,s)&=(2\pi i(t/n)^{-1})^{-n}\int _\bR \int^o_{\bR^{2n}}\psi_0(z_n,\alpha)e^{i\alpha (s+ \sum_{j=0}^{n-1}z_j\cdot_\sigma \xi_{j+1})} e^{\frac{i}{2}\sum_{j=1}^n|\xi_j|^2\frac{t}{n}}\mathrm{d}^{2d}\xi_1\dots \mathrm{d}^{2d}\xi_n\dr s\\
        &=(2\pi i(t/n)^{-1})^{-n} \int^o_{\bR^{2n}}\psi_0\left(z_n,s+\sum_{j=0}^{n-1}z_j\cdot_\sigma \xi_{j+1}\right) e^{\frac{i}{2}\sum_{j=1}^n|\xi_j|^2\frac{t}{n}}\mathrm{d}^{2d}\xi_1\dots \mathrm{d}^{2d}\xi_n
    \end{align*}
where $z_j=z_0+\sum_{k=1}^j\xi_kt/n$, $j=1,\ldots, n$ and $z_0\equiv z$. Actually the last line of the equation above resembles exactly the form of the finite dimensional approximations \eqref{fin dim schrodinger V0}. Indeed, the point $(z_n,s+\sum_{j=0}^{n-1}z_j\cdot_\sigma \xi_{j+1}) $ coincides with the position $ \gamma_{\xi_1,\dots,\xi_n}(t)$ attained at time $t$ by the piecewise geodesic path $\gamma_{\xi_1,\dots,\xi_n}$ defined by \eqref{piecewise geodesic WH}.

As already pointed out in Remark \ref{remark independence of Pn}, a crucial feature of definition \ref{intoscinf1} is the requirement that the limit of the sequence of finite dimensional approximations \eqref{infdim} does not depend on the sequence of finite dimensional projection operators $\{P_n\}$.
However, as extensively discussed in \cite{AlCaMa}, in the presence of a non trivial magnetic field the limit of the finite dimensional approximations of the  oscillatory integral 
\begin{equation}\label{in-osc-inf-dim-magnetic}
    \int _{\Hi_t}e^{\frac{i}{2}\int_0^t|\dot\gamma(r)|^2\dr r-i\int_0^t  A(x+\gamma(r))\cdot \dot \gamma (r)\dr r }
      \psi_0(x+\gamma(t))d\gamma
\end{equation}
do explicitly depend on the sequence of projections $\{P_n\}$ on the Cameron-Martin space $\Hi_t$. However, it is possible to recover the uniqueness of the limit by introducing in the finite dimensional approximations of \eqref{in-osc-inf-dim-magnetic} some specific renormalization couterterms. 

More precisely, considered a generic orthonormal basis $\{e_n\}$ of $\Hi_t$ and the corresponding sequence of projectors $P_n:\Hi_t\to \Hi_t$ onto the span of the first $n$ vectors $\{e_1,\dots, e_n\}$, it is possible to prove that the sequence of renormalized finite dimensional oscillatory integrals
\[\left( 2\pi i  \right )^{-n/2} \int_{P_n{\Hi_t}}^o e^{\frac{i}{2} \|\gamma_n \|^2 }e^{-i\int_0^tA(\gamma_n(r)+x)\dot\gamma_n(r) \dr r-R_n}\psi_0(\gamma_n(t)+x)d \gamma _n\]
converges to the solution of Eq. \eqref{schrodinger-magnetic} independently of the choice of the basis $\{e_n\}$.  The additional renormalization terms $\{R_n\}$ are explicitly given by 
\begin{equation}\label{renormalization}
    R_n=\frac{B}{2}\sum_{k=1}^n \int_0^t e_k (r)\wedge \dot{e}_k (r) \dr r\,,
\end{equation}
where  $B=\rot  A$ and each term in the sum on the right-hand side of \eqref{renormalization} provides the area integral swept by the path $r\mapsto e_k(r)$ in $\bR^2$. In the particular case where the vector potential $A$  is given by \eqref{vectorA}, $R_n$ reduces to
\[R_n=\alpha \sum_{k=1}^n \int_0^t e_k (r)\wedge \dot{e}_k (r) \dr r=\sum_{k=1}^n\int_0^tA(e_k(r))\cdot \dot e_k(r) \dr r\,,\]
and by resorting to the partial inverse Fourier transform of \eqref{patial fourier transform} and to the continuity of the Fourier transform in the $L^2$ topology, it is possible to obtain the  approximating sequence for the solution of Eq. \eqref{Schrodinger} (with $v=0$ and $d=2$):
\begin{equation}\label{general approximations magnetic field}
    \int_{P_n{\Hi_t}}^o \frac{ e^{\frac{i}{2} \|\gamma_n \|^2 } }{( 2\pi i   )^{n/2}} \psi_0\Big(z+\gamma_n(t), s{-}\int_0^t(z+\gamma_n(r))\wedge \dot \gamma_n(r)\dr r +i \sum_{k=1}^n \int_0^t e_k (r)\wedge \dot{e}_k (r) \dr r\Big)d \gamma _n\,,
\end{equation}
or, equvalently
      \begin{equation}\label{general approximations magnetic field-2}
          \int_{P_n{\Hi_t}}^o \frac{ e^{\frac{i}{2} \|\gamma_n \|^2 } }{( 2\pi i   )^{n/2}} \psi_0\Big(z+\gamma_n(t), s{+}\int_0^t(z+\gamma_n(r))\cdot_\sigma  \dot \gamma_n(r)\dr r -i \sum_{k=1}^n \int_0^t e_k (r)\cdot_\sigma  \dot{e}_k (r) \dr r\Big)d \gamma _n\,.
      \end{equation}
 It is important to note that formulae \eqref{general approximations magnetic field} and \eqref{general approximations magnetic field-2} are meaningful thanks to the assumption on the compactness of the support of the Fourier transform of the initial datum $\psi_0$. Indeed, in this case the function $\psi_0$ can be extended to a holomorphic function on $\bC^3 $ and the integrand in \eqref{general approximations magnetic field} is well defined.

   In the particular case of projection operators onto the subspaces $\Gamma_n$ of piecewise linear paths, it can be easily verified that $R_n=0$ (since the path $\{e_n\}_n$ spanning $\Gamma_n$ swept a vanishing area, while the term $z\cdot_\sigma  \gamma_n(t)-\int_0^t\gamma_n(r)\wedge \dot \gamma_n(r)\dr r$ associated with the area swept by the path $\gamma_n\in \Gamma_n$ reduces to the expression $\sum_{j=0}^{n-1}z_j\cdot_\sigma \xi_{j+1}$, where $z_j=z_0+\gamma(t_j)$ and $\xi_j=\dot \gamma(s)$ for $s\in (t_j,t_{j+1})$.

According to formula \eqref{general approximations magnetic field}, in the construction of the Feynman  representation for the solution of the Schr\"odinger equation on $\bH^1$ the integration has to be restricted to a peculiar class of paths $\gamma$ on the horizontal space, which contribute to the $s$ variable of the function $\psi(t)$ via their area integral $\int_0^t\gamma(r)\wedge \dot \gamma(r)\dr r$.
In fact, this can be considered to be the analogue of the Wiener integral representation \eqref{Feynman-Kac-magnetic-heat} for the solution of the heat equation.
The renormalization term $ \sum_{k=1}^n \int_0^t e_k (r)\wedge \dot{e}_k (r) \dr r $ due to the implementation of different approximation procedures arises as a consequence of the well known non-continuity of the stochastic integral or, more generally, of the solution of stochastic equations as a function of the driving noise. In fact, the Levy stochastic area is a paradigmatic example of this issue (see e.g. \cite{HeiFri} Proposition1.1), which can be addressed via different techniques of stochastic analysis, ranging from Malliavin calculus to rough path theory \cite{HeiFri,Nua}.

    \section{Construction of the Brownian motion on $ \bH^d$ as weak limit of random walks}

    \label{Sec: Construction of the Brownian motion on Hd as weak limit of random walks}
Below, we review some well-known facts in stochastic processes and refer the reader to \cite{Bil,EthKur} for the proofs.
\subsection{Weak convergence of probability measures on path spaces}
Given a locally compact and separable metric space $(E,d_E)$, we will denote with the symbol $D_E[0,+\infty)$ the set of {\em c\'adl\'ag} $E$-valued paths, i.e. the set of paths $\gamma:[0,+\infty)\to E$ satisfying
\begin{equation*}
 \forall t\geq 0, {\hbox{ exists} \lim_{s\to t^-}\gamma(s)}\equiv \gamma(t^-) \hbox{ and } \lim_{s\to t^+}\gamma(s)= \gamma(t)\,.
\end{equation*}

It is possible to define  a metric $d$ on $D_E[0,+\infty)$ under which it becomes a complete separable metric space. The topology induced by the metric is called {\em Skorohod topology} \cite{Bil,EthKur}. Further, if $(\gamma_n)_n$ is a sequence in $D_E[0,+\infty)$ converging in the Skorohod topology to a continuous path $\gamma \in D_E[0,+\infty)$, then (see \cite{EthKur} Lemma 3.10.1) $(\gamma_n)_n$ converges uniformly to $\gamma$ on compact sets:
\begin{equation}\label{unif-conv-D}
    d(\gamma_n,\gamma)\to 0\,, \gamma\in C_E[0,+\infty) \quad \Rightarrow \quad \forall u>0\, \sup_{t\in [0,u]}d_E(\gamma_n(t),\gamma(t))\to 0
\end{equation}
Let $\cF_D$  denote the  Borel $\sigma-$algebra on $D_{E}[0,+\infty)$. It can be proved that it coincides with the $\sigma$-algebra 
  $\sigma (\pi_t, t\geq 0)$ generated by the projection maps $\pi_t:D_{E}[0,+\infty)\to E$ defined by $\pi_t(\gamma):=\gamma (t)$.
Hence, a stochastic process $X=(\Omega, \cF , \mathbb{P}, (\cF_t)_{t\in\mathbb{R}_+}, (X_t)_{t\in\mathbb{R}_+})$ with  trajectories in  $D_{E}[0,+\infty)$ can be looked at as a $D_{E}[0,+\infty)-$valued random variable, i.e. as a map $X:\Omega \to D_{E}[0,+\infty)$ defined as:
$$X(\omega):=\gamma _\omega, \qquad \gamma_\omega (t):=X(t)(\omega), \qquad t\in [0, +\infty), \, \omega \in \Omega. $$
We shall denote with the symbol $\mu_X$ the distribution of $X$, i.e.  the probability measure  on $\cF_D$ obtained as the pushforward of $\bP$ under $X$:
$$\mu_X(I)=\bP(X(\omega )\in I) \, ,\qquad I\in \cF_D\, .$$

Another important class of sample paths is provided by the set $C_E[0,+\infty)$  of continuous $E$-valued paths, endowed with the topology of uniform convergence on compact sets, i.e $\gamma_n\to \gamma $ in $C_E[0,+\infty)$ if:
\begin{equation}\label{conv-CEinft}
     \sup_{t\in [0,T]}d_E(\gamma_n(t),\gamma(t))\to 0\quad \forall T>0
\end{equation} 
Analogously to the case of stochastic processes with c\'adl\'ag sample paths, given a stochastic process $X\equiv (\Omega, \cF, \bP, (\cF_t)_{t\in\mathbb{R}_+}, (X_t)_{t\in\mathbb{R}_+})$ with continuous trajectories, it can be regarded as a $C_E[0,+\infty)$-valued random variable and it induces a probability measure $\mu_X$ on $\mathcal{B}(C_E[0,+\infty))$.

If $(\mu_n)_n$ and $\mu$ are distributions of random variables $(X_n)_n$ and $X$ taking values in $S$, then, by definition,  $(\mu_n)_n$ weakly convergences to $\mu$ if and only if $(X_n)_n$ converges in distribution to $X$.
This applies in particular if $S=D_E[0,+\infty)$ or $S=C_E[0,+\infty)$, i.e. if $(X_n)_n$ and $X$ are stochastic processes with c\'adl\'ag, respectively continuous, $E$-valued paths.
It is important to point out that whenever $(\mu_n)_n$ weakly converges to $\mu$ then, according to the Skorohod representation theorem, it is possible to construct a probability space $(\Omega, \cF, \bP)$ and random variables $(X_n)_n$ and $X $ on $(\Omega, \cF)$ taking values in $S$ such that $X_n\to X$ almost surely:
\begin{equation*}
    \bP(\lim_{n\to \infty}X_n=X)=1\,.
\end{equation*}

The convergence in distribution of stochastic processes, or more specifically, of weak convergence of probability measures on the most relevant sample path spaces $C_E[0,+\infty)$ and $D_E[0,+\infty)$, is an extensively studied topic \cite{Bil}.  In the case of stochastic processes with continuous sample paths, the following theorem provides an important tool for the proof of their  convergence in distribution. In order to state it, let us recall the definition of the {\em modulus of continuity} $w_T(\gamma)$ of a path $\gamma:[0,T]\to E$, defined as the map $w_T(\gamma):\bR^+\to \bR$
\begin{equation}
    w_T(\gamma)(\delta)\equiv w_T(\gamma,\delta):=\sup_{s,t\in [0,T], |t-s|<\delta}d_E(\gamma(t),\gamma(s))\,.
\end{equation}
The following result is proved in \cite{Bil}, Theorem 7.5.
\begin{theorem}\label{TeoWK-C}
    Let $(X_n)_n$ and $X$ be stochastic processes with sample paths in $C_E[0,+\infty)$. If:
    \begin{enumerate}
        \item for all $k\geq 1$, $t_1,\ldots ,t_k\in [0,+\infty)$ the sequence of random variables $(X_n(t_1),\ldots,X_n(t_k))$ converges in distribution to the random variable $(X(t_1),\ldots,X(t_k))$;
        \item 
        \begin{equation}\label{tightness-1}
    \lim_{\delta\to 0}\limsup_{n} \bP(w_T(X_n,\delta)\geq \epsilon)=0\qquad\forall T>0\,,\,\forall\epsilon>0
\end{equation}
    \end{enumerate}
    then $(X_n)$ converges in distribution to  $X$.
\end{theorem}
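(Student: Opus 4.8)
The plan is to follow the classical Prohorov--Arzel\`a--Ascoli scheme: deduce that the sequence of laws is relatively compact in the weak topology, identify every subsequential limit through its finite-dimensional distributions, and conclude that the whole sequence converges. Throughout I write $\mu_{X_n}$ and $\mu_X$ for the laws of $X_n$ and $X$ on $C_E[0,+\infty)$, and I exploit that hypothesis (1) does double duty: it both forces tightness of the time-$t$ marginals (needed for relative compactness) and pins down the limit.

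First I would establish tightness of $(\mu_{X_n})_n$ on $C_E[0,+\infty)$, which is the technical heart of the argument. By the Arzel\`a--Ascoli theorem, a subset of $C_E[0,+\infty)$ is relatively compact in the topology \eqref{conv-CEinft} precisely when it is equicontinuous on each interval $[0,T]$ and, for $t$ ranging over a countable dense set, its sections $\{\gamma(t)\}$ are relatively compact in $E$. Fix $\eta>0$. Hypothesis \eqref{tightness-1} lets me choose, for each $m\in\bN$, a radius $\delta_m>0$ with $\limsup_n\bP(w_m(X_n,\delta_m)\geq 1/m)\leq\eta\,2^{-m-2}$; the finitely many indices $n$ that violate this inequality are absorbed into a compact set using continuity of each individual path, so the bound may be taken uniform in $n$. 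Hypothesis (1), specialized to $k=1$, yields convergence in distribution of $X_n(t)$ for each fixed $t$, hence tightness of these marginals: for each $t$ in a countable dense set $Q\subset[0,+\infty)$ there is a compact $K_t\subset E$ with $\sup_n\bP(X_n(t)\notin K_t)$ as small as desired, summing to at most $\eta/2$ over $Q$. The paths violating none of these countably many constraints are equicontinuous on compacts with relatively compact rational sections, hence by Arzel\`a--Ascoli lie in a fixed compact $K\subset C_E[0,+\infty)$, and $\mu_{X_n}(K)\geq 1-\eta$ uniformly in $n$.

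Since $C_E[0,+\infty)$ is a complete separable metric space, Prohorov's theorem turns this tightness into relative compactness for weak convergence: every subsequence of $(\mu_{X_n})_n$ admits a further subsequence converging weakly to some probability measure $\nu$. To identify $\nu$ I would use that, for any $t_1<\dots<t_k$, the evaluation map $\pi_{t_1,\dots,t_k}\colon C_E[0,+\infty)\to E^k$, $\gamma\mapsto(\gamma(t_1),\dots,\gamma(t_k))$, is continuous; the continuous mapping theorem then pushes weak convergence along the sub-subsequence forward to weak convergence of the $k$-dimensional marginals, whose limit is the law of $(X(t_1),\dots,X(t_k))$ by hypothesis (1). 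Thus $\nu$ and $\mu_X$ share all finite-dimensional distributions. Because the Borel $\sigma$-algebra of $C_E[0,+\infty)$ is generated by the coordinate projections $\pi_t$, just as recalled for the path spaces above (cf.\ \cite{EthKur}), these distributions determine the measure, so $\nu=\mu_X$.

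Finally, every subsequence of $(\mu_{X_n})_n$ has a sub-subsequence converging weakly to the \emph{same} limit $\mu_X$; the standard subsequence principle then forces the entire sequence to converge, $\mu_{X_n}\Rightarrow\mu_X$, which is exactly the convergence in distribution $X_n\Rightarrow X$. I expect the genuine difficulty to sit entirely in the tightness step, and within it in two places: the Arzel\`a--Ascoli construction of the compact set $K$ in a general locally compact separable metric space $E$, and the passage from the $\limsup_n$ in \eqref{tightness-1} to a bound holding for \emph{all} $n$, which is handled by enlarging $K$ to accommodate the finitely many exceptional indices. By contrast, the identification of the limit and the closing subsequence argument are soft, relying only on continuity of the coordinate maps and on finite-dimensional distributions determining laws on $C_E[0,+\infty)$.
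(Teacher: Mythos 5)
Your proposal is correct, and since the paper offers no argument of its own for Theorem \ref{TeoWK-C} (it simply cites Billingsley, Theorem 7.5), your tightness--Prohorov--identification scheme is exactly the classical proof behind that citation, with the right adaptation to $E$-valued paths on $[0,+\infty)$: using hypothesis (1) at every fixed $t$ to obtain marginal tightness along a countable dense set of times, rather than propagating boundedness from $t=0$ as in the real-valued $C[0,1]$ setting, which is the correct move since in a general locally compact $E$ equicontinuity alone does not transport compact containment along the time axis. One small repair to your phrasing: to upgrade the $\limsup_n$ in \eqref{tightness-1} to a bound valid for \emph{all} $n$, the standard device is not to enlarge the compact set but to shrink $\delta_m$ --- each fixed $X_n$ has almost surely uniformly continuous paths on $[0,m]$, so $\bP(w_m(X_n,\delta)\geq 1/m)\to 0$ as $\delta\downarrow 0$, and since only finitely many indices are exceptional a common, smaller $\delta_m$ serves for every $n$.
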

 Condition \eqref{tightness-1} is equivalent to the requirement that for all $T>0$, fixed   $\epsilon>0$ and $\eta>0$ it is possible to find $0<\delta<1$ and  $n_0\in \bN$ such that :
\begin{equation*}
         \bP(w_T(X_n,\delta)\geq \epsilon)<\eta, \qquad \forall n\geq n_0\,.
      \end{equation*}

 \subsection{A sequence of random walks associated to the Chernoff approximation}
 First of all, let us point out that the convergence result in \eqref{convergence-cherrnoff-C0} can also be formulated in the following way (see \cite{EN1} Th 5.2 Ch. III)
for all $T\geq 0$ and $f\in C_0(\mathbb{H}^d)$:
\begin{align*}
    \lim_{n\to\infty}\sup_{t\in[0,T]}\Big\|S(1/n)^{\lfloor nt\rfloor }f-V(t)f\Big\|_{C_0(\mathbb{H}^d)}=0\,.
\end{align*}
where $\lfloor \cdot\rfloor$ denotes the integer part.
Let us now consider the sequence $\{X_n(t)\}_{n\geq 1} $ of jump processes on $\mathbb{H}^d$ defined as
\begin{equation}\label{Xnjump}
\begin{cases}
X_n(0)\equiv (z,s) ,\\ X_n(t):=X_n(\lfloor  nt\rfloor/n)=Y_n(\lfloor  nt\rfloor) \quad t>0,
\end{cases}\end{equation}
where $(Y_n)_n$ is a discrete-time Markov process with Markov transition function \footnote{$P: \mathbb{H}^d\times  \mathcal{B}(\mathbb{H}^d)\to   [0,1]$ is the map associated to the discrete time Markov chain $Y=(Y_n)_n$ by $$ \bP(Y_{n+1}\in B|\cF_n^{Y})=P(Y_n,B)\,,$$
where $(\cF_n^{Y})$ is the filtration generated by the process $Y$.} $P((z,s),B)$  given by 
\begin{equation} P((z,s),B)= 
\frac{1}{(2\pi)^d}\int_{\mathbb{R}^{2d}}e^{-|\zeta|^2/2}\chi_B(z+\sqrt{t}\zeta,s+\sqrt{t} z\cdot_\sigma\zeta)\mathrm{d}^{2d}\zeta, \quad B\in \mathcal{B}(\mathbb{H}^d)\,,\end{equation}
with $\chi_B$ being the indicator function of the Borel set $B\in \mathcal{B}(\mathbb{H}^d)$ and $t=1/n$.

By construction, the sample paths of each random walk $(X_n)$ are c\'adl\'ag functions belonging to the space $D_{\mathbb{H}^d}[0,+\infty)$, hence each $X_n$ induces a probability measure $\mu_n$ on the Borel sets of $D_{\mathbb{H}^d}[0,+\infty)$.
By \cite[Theorem 2.6 Ch 4]{EthKur}, (see also \cite[Theorem $19.25$]{Kal}), 
the following holds:
\begin{theorem}\label{thconvD}
    The sequence of processes $X_n$ converges weakly in $D_{\mathbb{H}^d}[0,+\infty)$ and  its weak limit is the Feller process $X$ associated with the Feller semigroup $(V(t))_{t\geq 0}$ \smnote{Actially by the specific choice of the generator $L$, the process $X$ is the Brownian motion on $\mathbb{H}^d$}
\end{theorem}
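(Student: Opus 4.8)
The plan is to recognize $(X_n)_n$ as a family of time-rescaled Markov chains whose transition semigroups converge to the Feller semigroup $(V(t))_{t\geq 0}$, and then to invoke the abstract convergence theorem \cite[Theorem 2.6, Ch.~4]{EthKur} (equivalently \cite[Theorem 19.25]{Kal}) that upgrades such semigroup convergence to weak convergence in the Skorohod space $D_{\mathbb{H}^d}[0,+\infty)$. Concretely, I would first record that the one-step transition operator of the chain $(Y_n)_n$ in \eqref{Xnjump} is exactly $T_n:=S(1/n)$, the Chernoff operator \eqref{Eq: Chernoff approximation for Euclidean Feynman integral} taken with $c\equiv 0$. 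The operator $T_n$ is a genuine Feller transition operator: it is positivity preserving and conservative, i.e. $T_n\mathbf 1=\mathbf 1$, since it averages $\psi$ against the probability measure $\mu_G$; it satisfies $\|T_n\|_{\mathfrak{B}(C_0(\mathbb{H}^d))}\leq 1$; and, by the first assertion of Theorem \ref{continuous} (with $c\equiv 0$), it maps $C_0(\mathbb{H}^d)$ into itself. Because $X_n(t)=Y_n(\lfloor nt\rfloor)$, the process $X_n$ is precisely the step process built from $(Y_n)_n$ with constant time step $1/n$.

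Next I would supply the required semigroup convergence. By Theorem \ref{continuous} the family $S$ is a Chernoff function for $L/2$ on $C_0(\mathbb{H}^d)$, and the reformulation of \eqref{convergence-cherrnoff-C0} via \cite[Th.~5.2, Ch.~III]{EN1} displayed just above the statement gives, for every $f\in C_0(\mathbb{H}^d)$ and every $T>0$,
\[ \lim_{n\to\infty}\sup_{t\in[0,T]}\big\|T_n^{\lfloor nt\rfloor}f-V(t)f\big\|_{C_0(\mathbb{H}^d)}=0. \]
This is exactly the strong convergence of the discrete-time transition semigroups (with vanishing time step $1/n$) to the Feller semigroup $(V(t))_{t\geq 0}$.

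With these ingredients the cited theorem applies. Its remaining hypotheses are that $(V(t))$ be a Feller semigroup — which holds by construction (Section \ref{subsection-Heisenberg-group}) — and that the initial laws converge weakly; here every $X_n$ starts deterministically at $(z,s)$, so all initial laws equal $\delta_{(z,s)}$ and the condition is trivial. One therefore concludes that $X_n$ converges weakly in $D_{\mathbb{H}^d}[0,+\infty)$ to the Feller process $X$ associated with $(V(t))_{t\geq 0}$. Since the generator is $L/2$ with $L$ the sub-Laplacian of $\mathbb{H}^d$, this limit $X$ is the Brownian motion on $\mathbb{H}^d$ introduced in Section \ref{subsection-Heisenberg-group}; moreover, as $X$ has continuous paths, the weak convergence in $D_{\mathbb{H}^d}[0,+\infty)$ in fact entails uniform convergence on compact time intervals by \eqref{unif-conv-D}.

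The main obstacle is not in these structural verifications but in the tightness of $(X_n)_n$ in the Skorohod space, which is the genuinely analytic content behind the weak convergence. In the route above it is delivered automatically by \cite[Theorem 2.6, Ch.~4]{EthKur}, so that the real task reduces to confirming the structural hypotheses — positivity, conservativeness and $C_0$-invariance of $T_n=S(1/n)$ — together with the semigroup convergence, rather than establishing tightness by hand. Should one wish to argue directly, one would combine the convergence of the finite-dimensional distributions, read off from $T_n^{\lfloor nt\rfloor}\to V(t)$, with a Skorohod-tightness estimate; since the limit $X$ is continuous and the jumps of $X_n$ have size controlled by $\sqrt{1/n}$, this would be the $D_{\mathbb{H}^d}$-analogue of the modulus-of-continuity criterion \eqref{tightness-1} of Theorem \ref{TeoWK-C}, whose verification in the Carnot-Carath\'eodory metric would be the crux.
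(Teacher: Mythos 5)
Your proposal is correct and follows essentially the same route as the paper, which proves Theorem \ref{thconvD} precisely by identifying the one-step transition operator of the chain in \eqref{Xnjump} with $S(1/n)$ ($c\equiv 0$), invoking the reformulated Chernoff convergence $\sup_{t\in[0,T]}\|S(1/n)^{\lfloor nt\rfloor}f-V(t)f\|_{C_0(\mathbb{H}^d)}\to 0$ from \cite[Th.~5.2, Ch.~III]{EN1}, and then applying \cite[Theorem 2.6, Ch.~4]{EthKur} (or \cite[Theorem 19.25]{Kal}). In fact you spell out more than the paper does --- the positivity, conservativeness and $C_0$-invariance of $T_n$, and the trivial convergence of the initial laws $\delta_{(z,s)}$ --- since the paper leaves these structural hypotheses implicit in the citation.
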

In other words, according to Theorem \ref{thconvD}, the sequence of probability measures $\{\mu_{X_n}\}_n$ on the Borel $\sigma $-algebra of $D_{\mathbb{H}^d}[0,+\infty)$ converges weakly to the distribution $\mu$ of process $X$, i.e. the Brownian motion on $\mathbb{H}^d$. \\
As a diffusion process, $X$ has continuous sample paths, hence it is meaningful to investigate alternative approximation of $X$ in terms of sequences of random walks on $\mathbb{H}^d$ with trajectories in $C_{\mathbb{H}^d}[0,+\infty)$. 
To this end, let us consider the sequence of processes $(Z_n)_n$ with sample paths obtained by continuous interpolation of the paths of $(X_n)_n$ by means of geodesic arcs.
More specifically, for any $t\in [0,+\infty)$ the random variables $(Z_n(t))_{t\geq 0}$ are defined by
\begin{equation}
    \begin{cases}
    Z_n (0)\equiv (z,s), \\
    Z_n(m/n)\equiv X_n(m/n), \quad m\in \bN,\\
     \quad Z_n(t)=\gamma_{X_n(m/n),X_n((m+1)/n)}(t-m/n), \: t\in [m/n, (m+1)/n]
    \end{cases}
\end{equation}
where $\gamma_{x,y}(t)$ denotes an arbitrary shortest geodesics in $\mathbb{H}^d$ such that $\gamma_{x,y}(0)=x$ and $\gamma_{x,y}(1/n)=y$. 
Let us denote with $\mu_{Z_n}$, resp. $\mu_X$, the Borel measure over  $C_{\mathbb{H}^d}[0,T]$ induced by the process $Z_n$, resp. $X$.  The following holds.
\begin{theorem}\label{teoconvC}
Under the assumptions above, $Z_n$ converges in distribution to $X$ on $C_{\mathbb{H}^d}[0,T]$ (equivalently  the sequence of measures $\{\mu_{Z_n}\}$ converges weakly to $\mu_X$. 
\end{theorem}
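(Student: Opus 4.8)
The plan is to deduce the convergence of the geodesic interpolations $Z_n$ in $C_{\mathbb{H}^d}[0,T]$ from the already established convergence of the jump processes $X_n$ in $D_{\mathbb{H}^d}[0,+\infty)$ (Theorem \ref{thconvD}), by showing that $X_n$ and $Z_n$ are uniformly close with overwhelming probability. The two ingredients are: (i) a pathwise estimate bounding the uniform CC-distance between $Z_n$ and $X_n$ by the largest single increment of the underlying random walk; and (ii) the Skorohod representation theorem together with the fact, recorded in \eqref{unif-conv-D}, that Skorohod convergence to a \emph{continuous} path is automatically uniform on compact sets. Since the limit $X$ is a diffusion, its trajectories are continuous, so (ii) applies.

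First I would establish the key estimate that $\sup_{t\in[0,T]}d_{\mathbb{H}}(Z_n(t),X_n(t))\to 0$ in probability. The point is that one step of the walk, from $(z,s)$ to $(z+n^{-1/2}\zeta,\,s+n^{-1/2}z\cdot_\sigma\zeta)$, is joined by the explicit curve $r\mapsto(z+r\zeta,\,s+r\,z\cdot_\sigma\zeta)$, $r\in[0,n^{-1/2}]$, which is horizontal with constant horizontal velocity $\zeta$ (this is precisely the geodesic structure underlying \eqref{piecewise geodesic WH}), hence of length $n^{-1/2}|\zeta|$. Consequently the $m$-th jump of $X_n$ has CC-length at most $n^{-1/2}|\zeta_{m+1}|$, where the $\zeta_{m+1}$ are the i.i.d.\ standard Gaussian increments on $\mathbb{R}^{2d}$ defining the transition kernel $P$. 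On each interval $[m/n,(m+1)/n)$ the process $X_n$ is constant while $Z_n$ runs along a shortest geodesic between the two consecutive jump values, so every point $Z_n(t)$ lies within CC-distance $d_{\mathbb{H}}(X_n(m/n),X_n((m+1)/n))\le n^{-1/2}|\zeta_{m+1}|$ of $X_n(t)$. Therefore
\[
\sup_{t\in[0,T]}d_{\mathbb{H}}\big(Z_n(t),X_n(t)\big)\le n^{-1/2}\max_{0\le m\le\lfloor nT\rfloor}|\zeta_{m+1}|,
\]
and a union bound using the chi-squared tail $\mathbb{P}(|\zeta|\ge\epsilon\sqrt n)\le C_d\,e^{-c_\epsilon n}$ gives $\mathbb{P}(\sup_t d_{\mathbb{H}}(Z_n,X_n)\ge\epsilon)\le(\lfloor nT\rfloor+1)C_d\,e^{-c_\epsilon n}\to 0$.

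Next I would conclude by coupling. By Theorem \ref{thconvD} and Skorohod's representation theorem there exist, on a common probability space, copies $\tilde X_n\overset{d}{=}X_n$ and $\tilde X\overset{d}{=}X$ with $\tilde X_n\to\tilde X$ in $D_{\mathbb{H}^d}[0,+\infty)$ almost surely; since $\tilde X$ is continuous, \eqref{unif-conv-D} upgrades this to $\sup_{t\in[0,T]}d_{\mathbb{H}}(\tilde X_n,\tilde X)\to 0$ almost surely. Applying to $\tilde X_n$ the same geodesic-interpolation functional that produces $Z_n$ from $X_n$ yields processes $\tilde Z_n\overset{d}{=}Z_n$, and the pathwise bound above, depending only on the joint law of the walk and its interpolation, gives $\sup_{t\in[0,T]}d_{\mathbb{H}}(\tilde Z_n,\tilde X_n)\to 0$ in probability. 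By the triangle inequality $\sup_{t\in[0,T]}d_{\mathbb{H}}(\tilde Z_n,\tilde X)\to 0$ in probability, so $\tilde Z_n\to\tilde X$ in $C_{\mathbb{H}^d}[0,T]$ in probability, hence in distribution; as $\tilde Z_n\overset{d}{=}Z_n$ and $\tilde X\overset{d}{=}X$, this is exactly the weak convergence $\mu_{Z_n}\to\mu_X$.

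I expect the main obstacle to be precisely the reason for routing through the $D$-space convergence rather than verifying the hypotheses of Theorem \ref{TeoWK-C} directly. The only elementary control on the increments is the arc-length bound $\Delta_m\le n^{-1/2}|\zeta_{m+1}|$, and summing these \emph{positive} quantities over the $\sim n\delta$ grid intervals meeting a window of width $\delta$ produces a modulus-of-continuity bound of order $n^{1/2}\delta$, which diverges and fails to give the tightness condition \eqref{tightness-1}. The genuine smallness of $d_{\mathbb{H}}(Z_n(t),Z_n(s))$ comes from cancellation in the horizontal increments, reflecting the infinite variation of the limiting Brownian paths, and the coupling argument lets me inherit this cancellation for free from the tightness of $X_n$ in $D$ already contained in Theorem \ref{thconvD}. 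A secondary technical point to settle is the existence of a measurable selection of the shortest geodesics $\gamma_{x,y}$, so that $Z_n$ is a bona fide random variable and the interpolation is a fixed measurable functional of the path; this is guaranteed by standard measurable-selection theorems, and the estimate above holds for \emph{any} shortest geodesic, so the precise selection is irrelevant to the conclusion.
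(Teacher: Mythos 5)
Your proposal is correct, but it follows a genuinely different route from the paper's. The paper proves the theorem by verifying the hypotheses of Theorem \ref{TeoWK-C}: using the betweenness property of shortest geodesics it establishes the pathwise transfer estimate
\[
w_T(Z_n,\delta)\;\leq\; 3\max\big\{ d_{\mathbb{H}}\big(X_n(m/n),X_n(m'/n)\big) \,:\, |m/n-m'/n|<\delta\big\}\,,
\]
whence $\mu_{Z_n}\big(w_T(\gamma,\delta)>\varepsilon\big)\leq\mu_{X_n}\big(w_T(\gamma,\delta)>\varepsilon/3\big)$, and then applies Lemma \ref{lemma4} (which is where the paper hides the Skorohod representation and \eqref{unif-conv-D}) to $\mu_{X_n}\Rightarrow\mu_X$ to obtain the tightness condition \eqref{tightness-1}, identifying the limit at the end via tightness plus uniqueness of the limit point. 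You bypass Theorem \ref{TeoWK-C} and \eqref{tightness-1} entirely: your uniform-closeness bound $\sup_{[0,T]}d_{\mathbb{H}}(Z_n,X_n)\leq n^{-1/2}\max_m|\zeta_{m+1}|$, killed by the chi-squared tail and a union bound, lets you use the Skorohod representation directly at the level of processes and conclude by the triangle inequality. Both arguments exploit the same geometric fact (a point on a shortest geodesic lies within $d_{\mathbb{H}}(a,b)$ of its endpoints) and both rest on Theorem \ref{thconvD} together with \eqref{unif-conv-D}. Your route buys an explicit exponential rate for the $Z_n$--$X_n$ discrepancy and a cleaner limit identification, since the coupling delivers convergence of the whole process, whereas the paper's final ``only possible limit point'' step is left somewhat implicit. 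The paper's route buys robustness: the factor-$3$ modulus transfer requires no tail or moment control on the increments, all smallness being inherited from the $D$-space tightness of $X_n$; if you wanted the same generality, you could replace your Gaussian tail bound by noting that the maximal jump of $X_n$ on $[0,T]$ is dominated by $w_T(X_n,\delta)$ for every $\delta>1/n$ and hence tends to $0$ in probability by Lemma \ref{lemma4} alone. One small correction to your closing diagnosis: the paper \emph{does} verify \eqref{tightness-1} for $Z_n$ --- not by summing arc lengths, which as you rightly observe produces the divergent $n^{1/2}\delta$, but by the geodesic transfer to the discrete modulus of $X_n$; your coupling and the paper's Lemma \ref{lemma4} are two packagings of the same underlying cancellation.
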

The proof of this result relies on Theorem \ref{TeoWK-C}
and on the following lemma (see also \cite{SWW2007}).

\begin{lemma}\label{lemma4}
  let $\nu_n$ be a sequence of probability measures on  $D_E[0,+\infty)$ converging weakly to a finite measure $\nu$ which is concentrated on $C_E[0,+\infty)$. Then for any $T>0$ and for any $\varepsilon >0$ 
  \begin{equation}\label{conv-lemma-DE}
      \lim_{\delta \downarrow 0}\limsup_{n\in \bN}\nu_n(\{w_T(\gamma , \delta )>\varepsilon\})=0
  \end{equation}
\end{lemma}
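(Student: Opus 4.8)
The plan is to deduce the statement from the Skorohod representation theorem, which converts the weak convergence $\nu_n\Rightarrow\nu$ into almost sure convergence of suitably coupled random paths, and then to exploit the elementary fact that uniform closeness of paths controls their moduli of continuity. First I would invoke the Skorohod representation theorem (recalled above) to produce a probability space $(\Omega,\cF,\bP)$ together with $D_E[0,+\infty)$-valued random variables $\gamma_n$ and $\gamma$, having laws $\nu_n$ and $\nu$ respectively, such that $\gamma_n\to\gamma$ almost surely in the Skorohod topology. Since $\nu$ is concentrated on $C_E[0,+\infty)$, the limit $\gamma$ is almost surely a continuous path, so by the implication \eqref{unif-conv-D} the Skorohod convergence upgrades to uniform convergence on compact sets: for every $T>0$ one has $\sup_{t\in[0,T]}d_E(\gamma_n(t),\gamma(t))\to 0$ almost surely.

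The key deterministic estimate is the triangle inequality for the modulus of continuity,
$$w_T(\gamma_n,\delta)\le w_T(\gamma,\delta)+2\sup_{t\in[0,T]}d_E(\gamma_n(t),\gamma(t)),$$
valid for any pair of paths and obtained by inserting $\gamma(s)$ and $\gamma(t)$ into $d_E(\gamma_n(s),\gamma_n(t))$ before taking the supremum over $|s-t|<\delta$. Writing $\rho_n:=\sup_{t\in[0,T]}d_E(\gamma_n(t),\gamma(t))$, this gives the inclusion $\{w_T(\gamma_n,\delta)>\varepsilon\}\subset\{w_T(\gamma,\delta)>\varepsilon/2\}\cup\{\rho_n>\varepsilon/4\}$, whence
$$\bP(w_T(\gamma_n,\delta)>\varepsilon)\le \bP(w_T(\gamma,\delta)>\varepsilon/2)+\bP(\rho_n>\varepsilon/4).$$
Because $\rho_n\to 0$ almost surely, it converges to $0$ in probability, so passing to $\limsup_n$ annihilates the second term and leaves $\limsup_n\bP(w_T(\gamma_n,\delta)>\varepsilon)\le \bP(w_T(\gamma,\delta)>\varepsilon/2)$. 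Finally, since $\gamma$ is almost surely uniformly continuous on $[0,T]$, one has $w_T(\gamma,\delta)\to 0$ almost surely as $\delta\downarrow 0$; dominated convergence then forces $\bP(w_T(\gamma,\delta)>\varepsilon/2)\to 0$, and recalling that $\bP(w_T(\gamma_n,\delta)>\varepsilon)=\nu_n(\{w_T(\,\cdot\,,\delta)>\varepsilon\})$ yields exactly \eqref{conv-lemma-DE}.

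The one point requiring care — which I would treat as the main technical obstacle — is the measurability of the functional $\gamma\mapsto w_T(\gamma,\delta)$ on $D_E[0,+\infty)$, so that the probabilities above are meaningful. This is handled by observing that for c\'adl\'ag paths the supremum defining $w_T$ may be evaluated over a countable dense set of time pairs (using right-continuity together with the existence of left limits), which exhibits $w_T(\,\cdot\,,\delta)$ as a Borel measurable map. Everything else is a routine combination of the coupling provided by Skorohod representation, the uniform-convergence upgrade \eqref{unif-conv-D}, and the triangle inequality, so no further estimates are needed.
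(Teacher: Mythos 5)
Your proof is correct and takes essentially the same route as the paper's: Skorohod representation to turn weak convergence into almost sure convergence, the upgrade to uniform convergence on compacts via \eqref{unif-conv-D} because $\nu$ is concentrated on $C_E[0,+\infty)$, the triangle inequality $w_T(\gamma_n,\delta)\le w_T(\gamma,\delta)+2\sup_{t\in[0,T]}d_E(\gamma_n(t),\gamma(t))$, and the vanishing of both resulting probability terms. The only differences are cosmetic: you organize the conclusion via an event inclusion and $\limsup$ where the paper runs an explicit $\epsilon$--$\eta$ bookkeeping, and your measurability remark for $w_T(\cdot,\delta)$ is a welcome addition that the paper leaves implicit.
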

\begin{proof}[Proof of Lemma \ref{lemma4}]
    By the Skorohod representation theorem there exist a probability space $(\Omega, \cF,\bP)$ and $D_E[0,+\infty)$-valued random variables $(X_n)$, $X$ ( i.e. stochastic processes  with paths in  $D_E[0,+\infty)$) such that $(\nu_n)$ and $\nu$ are the corresponding distributions and $X_n\to X$ a.s. Hence, there exists a set $A\in \cF$, with $\bP(A)=1$, such that for all $\omega \in A$ the sequence of  paths $\gamma_{n,\omega}\equiv X_n(\omega) $ converges to the path $\gamma_\omega\equiv X(\omega)$ in the topology of $D_E[0,+\infty)$. By Eq. \eqref{unif-conv-D}, for any compact set $[0,T]$ $\gamma_{n,\omega}$ converges uniformly on $[0,T]$ to $\gamma_\omega$ for any $\omega \in A$. Since by assumption $\nu$ is concentrated on $C_E[0,+\infty)$, the paths $\gamma_\omega$ are a.s. continuous, hence for all $T>0$ we have $w_T(\gamma , \delta )$ converges a.s. to 0 for $\delta\downarrow 0$. This gives for all $T>0$ the convergence of  $w_T(\gamma , \delta )$  in probability:
    \begin{equation}\label{cont-w-gamma}
   \forall \epsilon >0\qquad      \lim_{\delta \to 0}\bP(w_T(\gamma , \delta )>\epsilon)=0\,.
    \end{equation}
    Let us fix a compact interval $ [0,T]$ and consider now the sample paths $(\gamma_{n,\omega})_n$ of the sequence of processes $(X_n)_n$. For any $\omega\in \Omega$ and $\delta >0$ the following holds:
    $$ w_T(\gamma_{n,\omega} , \delta )\leq w_T(\gamma_\omega, \delta )+2\sup_{t\in [0,T]}d_{\mathbb{H}^d}(\gamma_{n,\omega}(t),\gamma_\omega(t))\,.$$
    Hence, by the a.s convergence of $\gamma_n$ to $\gamma$ and \eqref{cont-w-gamma} we have:
    \begin{equation}
        \label{in-wn-w}
        \bP(w_T(\gamma_{n,\omega} , \delta )>\epsilon)\leq \bP\Big( w_T(\gamma_\omega, \delta )>\epsilon-2\sup_{t\in [0,T]}d_{\mathbb{H}^d}(\gamma_{n,\omega}(t),\gamma_\omega(t)\Big)\,.
    \end{equation}
    Let us now fix $\epsilon >0 $ and $\eta>0$.
    Since $\gamma_n\to \gamma$ a.s., we have that $\sup_{t\in [0,T]}d_{\mathbb{H}^d}(\gamma_{n,\omega}(t),\gamma_\omega(t))\to 0$ in probability, hence there exists an $n_0\in \bN$  such that 
    \begin{equation}\label{con-un-p-gng}
        \bP\Big(\sup_{t\in [0,T]}d_{\mathbb{H}^d}(\gamma_{n,\omega}(t),\gamma_\omega(t))>\epsilon/2\Big)<\frac{\eta}{2},\qquad \forall n\geq n_0\,.
    \end{equation}
    Moreover, by inequality \eqref{in-wn-w} we have:
    \begin{align*}
      &\bP(w_T(\gamma_{n,\omega} , \delta )>\epsilon)
      \leq  \bP\Big( w(\gamma_\omega, \delta )>\epsilon-2\sup_{t\in [0,T]}d_{\mathbb{H}^d}(\gamma_{n,\omega}(t),\gamma_\omega(t))>\epsilon/2\Big)\\
      &+\bP\Big( w_T(\gamma_\omega, \delta )>\epsilon-2\sup_{t\in [0,T]}d_{\mathbb{H}^d}(\gamma_{n,\omega}(t),\gamma_\omega(t)),\sup_{t\in [0,T]}d_{\mathbb{H}^d}(\gamma_{n,\omega}(t),\gamma_\omega(t))\leq\epsilon/2\Big)\\
      &\leq \bP\Big(\sup_{t\in [0,T]}d_{\mathbb{H}^d}(\gamma_{n,\omega}(t),\gamma_\omega(t))>\epsilon/2\Big)
      +\bP\Big(\sup_{t\in [0,T]}d_{\mathbb{H}^d}(\gamma_{n,\omega}(t),\gamma_\omega(t))\leq\epsilon/2, w_T(\gamma,\delta)>\epsilon/2\Big)
      \\
      &\leq \bP\Big(\sup_{t\in [0,T]}d_{\mathbb{H}^d}(\gamma_{n,\omega}(t),\gamma_\omega(t))>\epsilon/2\Big)+\bP(w_T(\gamma,\delta)>\epsilon/2)
    \end{align*}
    By Eq. \eqref{cont-w-gamma}, there exist a $\delta^*$ such that $\bP(w_T(\gamma,\delta)>\epsilon/2)<\eta/2$ for all $\delta <\delta^*$, which together with Eq. \eqref{con-un-p-gng} gives \eqref{conv-lemma-DE}.
\end{proof}

\begin{proof}[Proof of Theorem \ref{teoconvC}]
 Let us consider the trajectories $\gamma_\omega$ of the process $Z_n$, defined as $\gamma_\omega (t):=Z_n(t)(\omega)$. Fix $\delta >0$ and take $n$ in a sufficiently large way that $1/n<\delta$. Consider $s,t\in [0,T]$, $s<t$, $|t-s|<\delta$. We will have   $s\in [m/n,(m+1)/n]$ and $t\in [m'/n,(m'+1)/n]$, with $m\leq m'$, hence:\smnote{in questi passaggi utilizziamo le propriet\'a delle geodetiche}
\begin{align*}
    & d(\gamma_\omega (s), \gamma _\omega (t))\\
    &\leq
    d\left(\gamma_\omega(s),\gamma_\omega((m+1)/n)\right)+d\left(\gamma_\omega((m+1)/n),\gamma_\omega(m'/n)\right)+ d\left(\gamma_\omega(m'/n),\gamma_\omega(t)\right)\\
    &\leq d\left(\gamma_\omega(m/n),\gamma_\omega((m+1)/n)\right)+d\left(\gamma_\omega((m+1)/n),\gamma_\omega(m'/n)\right)+ d\left(\gamma_\omega(m'/n),\gamma_\omega((m'+1)/n)\right)\\
    &\leq 3\max\{ d\left(\gamma_\omega(m/n),\gamma_\omega(m'/n)\right), |m/n-m'/n|<\delta\}
\end{align*}

We can then estimate the probability that the modulus of continuity of the trajectories of $Z_n$ exceeds a given $\varepsilon >0$ as
\begin{eqnarray*}
&&\mu_{Z_n} \left(\{\gamma\in C_{\mathbb{H}^d}[0,T]\colon w(\gamma ,\delta)>\varepsilon\}\right)\\
&&\leq \mu_{Z_n} \left(\{\gamma\in C_{\mathbb{H}^d}[0,T]\colon \max_m\{d(\gamma (m/n),\gamma (m+1)/n))\}>\varepsilon/3\}\right)\\
& &=\mu_{X_n}\left(\{\gamma\in D_{\mathbb{H}^d}[0,T]\colon
w(\gamma ,\delta)>\varepsilon /3\}\right)
\end{eqnarray*}
By Theorem \ref{thconvD} and Lemma \ref{lemma4}, we get for any $\varepsilon>0$
$$\lim_{\delta\downarrow 0}\limsup_n \mu_{Z_n} \left(\{\gamma\in C_{\mathbb{H}^d}[0,T]\colon w(\gamma ,\delta)>\varepsilon\}\right) =0$$
Since $Z_n(0)=(z,s)$ for any $n$, the sequence of probability measures $\{\mu_{Z_n}\}$ is tight \cite{Bil} and the measure $\mu_X$, i.e. the law of $X$, is the only possible limit point.
\end{proof}

\end{document}